\documentclass[11pt,fleqn]{amsart}
\usepackage[utf8]{inputenx}
\usepackage[T1]{fontenc}
\usepackage{newunicodechar}

\newunicodechar{α}{\alpha}
\newunicodechar{β}{\beta}
\newunicodechar{γ}{\gamma}
\newunicodechar{δ}{\delta}
\newunicodechar{ε}{\epsilon}
\newunicodechar{ζ}{\zeta}
\newunicodechar{η}{\eta}
\newunicodechar{θ}{\theta}
\newunicodechar{ι}{\iota}
\newunicodechar{κ}{\kappa}
\newunicodechar{λ}{\lambda}
\newunicodechar{μ}{\mu}
\newunicodechar{ν}{\nu}
\newunicodechar{ξ}{\xi}
\newunicodechar{ο}{\omicron}
\newunicodechar{π}{\pi}
\newunicodechar{ρ}{\rho}
\newunicodechar{σ}{\sigma}
\newunicodechar{τ}{\tau}
\newunicodechar{υ}{\upsilon}
\newunicodechar{φ}{\phi}
\newunicodechar{χ}{\chi}
\newunicodechar{ψ}{\psi}
\newunicodechar{ω}{\omega}

\newunicodechar{Α}{\Alpha}
\newunicodechar{Β}{\Beta}
\newunicodechar{Γ}{\Gamma}
\newunicodechar{Δ}{\Delta}
\newunicodechar{Ε}{\Epsilon}
\newunicodechar{Ζ}{\Zeta}
\newunicodechar{Η}{\Eta}
\newunicodechar{Θ}{\Theta}
\newunicodechar{Ι}{\Iota}
\newunicodechar{Κ}{\Kappa}
\newunicodechar{Λ}{\Lambda}
\newunicodechar{Μ}{\Mu}
\newunicodechar{Ν}{\Nu}
\newunicodechar{Ξ}{\Xi}
\newunicodechar{Ο}{\Omicron}
\newunicodechar{Π}{\prod}
\newunicodechar{Ρ}{\Rho}
\newunicodechar{Σ}{\sum}
\newunicodechar{Τ}{\Tau}
\newunicodechar{Υ}{\Upsilon}
\newunicodechar{Φ}{\Phi}
\newunicodechar{Χ}{\Chi}
\newunicodechar{Ψ}{\Psi}
\newunicodechar{Ω}{\Omega}

\newunicodechar{𝒳}{\mathcal{X}}
\newunicodechar{𝒴}{\mathcal{Y}}
\newunicodechar{ℳ}{\mathcal{M}}

\newunicodechar{ℕ}{\mathbb{N}}
\newunicodechar{ℤ}{\mathbb{Z}}
\newunicodechar{ℚ}{\mathbb{Q}}
\newunicodechar{ℝ}{\mathbb{R}}
\newunicodechar{ℂ}{\mathbb{C}}
\newunicodechar{ℍ}{\mathbb{H}}
\newunicodechar{ℙ}{\mathbb{P}}

\newunicodechar{∅}{\emptyset}
\newunicodechar{≠}{\neq}
\newunicodechar{≅}{\cong}
\newunicodechar{∂}{\partial}
\newunicodechar{∊}{\in}
\newunicodechar{∉}{\notin}
\newunicodechar{∀}{\forall}
\newunicodechar{∃}{\exists}
\newunicodechar{∞}{\infty}
\newunicodechar{→}{\rightarrow}
\newunicodechar{⇒}{\implies}
\newunicodechar{⇔}{\iff}
\newunicodechar{≤}{\leq}
\newunicodechar{≥}{\geq}
\newunicodechar{⊂}{\subset}
\newunicodechar{⊆}{\subseteq}
\newunicodechar{⊃}{\supset}
\newunicodechar{⊇}{\supseteq}
\newunicodechar{⋂}{\cap}
\newunicodechar{⋃}{\cup}
\newunicodechar{∫}{\int}
\newunicodechar{⋀}{\wedge}
\newunicodechar{⊗}{\otimes}
\newunicodechar{⊕}{\oplus}

\usepackage{amssymb}
\usepackage{amsmath}
\usepackage{mathtools}
\usepackage{enumitem}
\usepackage{mathrsfs}
\usepackage[all]{xy}
\usepackage{mathtools}
\usepackage{hyperref}
\usepackage{cleveref}
\usepackage{url}
\usepackage{comment}
\usepackage{lipsum} 
\usepackage{tikz-cd}
\usepackage{tikz}
\usepackage{capt-of}

\usepackage[british]{babel}
\usepackage[margin=1.2in]{geometry}
\newcommand{\SL}{\textup{SL}}

\newtheorem{lemma}{Lemma}[section]

\newtheorem{theorem}{Theorem}[section]
\newtheorem{proposition}[lemma]{Proposition}

\theoremstyle{definition}

\newtheorem{definition}[lemma]{Definition}

\newcommand\R{{\mathbb R}}
\newcommand\K{{\mathbb K}}

\newcommand\Z{\mathbb{Z}}

\newcommand{\Q}{\mathbb{Q}}

\newcommand\C{\mathbb{C}}

\newcommand{\HH}{{\mathbb H}}

\newcommand{\OO}{\mathcal{O}}
\newcommand\B{{\mathcal B}}

\newtheoremstyle{break}
  {\topsep}{\topsep}%
  {\itshape}{}%
  {\bfseries}{}%
  {\newline}{}%
\theoremstyle{break}
\newtheorem{theoremb}[lemma]{Theorem}
\theoremstyle{plain}                    

\theoremstyle{definition}
\newcommand{\drawgeo}[4]{%
  \pgfmathsetmacro{\xone}{#1}%
  \pgfmathsetmacro{\yone}{#2}%
  \pgfmathsetmacro{\xtwo}{#3}%
  \pgfmathsetmacro{\ytwo}{#4}%
  \pgfmathtruncatemacro{\isvert}{ifthenelse(abs(\xone-\xtwo)<1e-8,1,0)}%
  \ifnum\isvert=1
    \draw[line width=1pt] (\xone,\yone) -- (\xtwo,\ytwo);
  \else
    \pgfmathsetmacro{\xc}{(\xtwo*\xtwo+\ytwo*\ytwo-\xone*\xone-\yone*\yone)/(2*(\xtwo-\xone))}%
    \pgfmathsetmacro{\r}{sqrt((\xone-\xc)^2 + (\yone)^2)}%
    \pgfmathparse{atan2(\yone,\xone-\xc)} \let\angone\pgfmathresult
    \pgfmathparse{atan2(\ytwo,\xtwo-\xc)} \let\angtwo\pgfmathresult
    \pgfmathsetmacro{\delta}{\angtwo-\angone}%
    \pgfmathparse{(\delta>180)?\angtwo-360:((\delta<-180)?\angtwo+360:\angtwo)} \let\angtwoadj\pgfmathresult
    \draw[line width=1pt] (\xc,0) ++(\angone:\r) arc (\angone:\angtwoadj:\r);
  \fi
}    

\usepackage{tikz}

\def\hypquad(#1,#2)(#3,#4)(#5,#6)(#7,#8)#9{
    \pgfmathsetmacro{\xcA}{(#3*#3 - #1*#1 + #4*#4 - #2*#2)/(2*(#3 - #1))}
    \pgfmathsetmacro{\rA}{sqrt((#1-\xcA)^2 + #2*#2)}
    \pgfmathsetmacro{\startA}{atan2(#2, #1-\xcA)}
    \pgfmathsetmacro{\endA}{atan2(#4, #3-\xcA)}

    \pgfmathsetmacro{\xcB}{(#5*#5 - #3*#3 + #6*#6 - #4*#4)/(2*(#5 - #3))}
    \pgfmathsetmacro{\rB}{sqrt((#3-\xcB)^2 + #4*#4)}
    \pgfmathsetmacro{\startB}{atan2(#4, #3-\xcB)}
    \pgfmathsetmacro{\endB}{atan2(#6, #5-\xcB)}

    \pgfmathsetmacro{\xcC}{(#7*#7 - #5*#5 + #8*#8 - #6*#6)/(2*(#7 - #5))}
    \pgfmathsetmacro{\rC}{sqrt((#5-\xcC)^2 + #6*#6)}
    \pgfmathsetmacro{\startC}{atan2(#6, #5-\xcC)}
    \pgfmathsetmacro{\endC}{atan2(#8, #7-\xcC)}

    \pgfmathsetmacro{\xcD}{(#1*#1 - #7*#7 + #2*#2 - #8*#8)/(2*(#1 - #7))}
    \pgfmathsetmacro{\rD}{sqrt((#7-\xcD)^2 + #8*#8)}
    \pgfmathsetmacro{\startD}{atan2(#8, #7-\xcD)}
    \pgfmathsetmacro{\endD}{atan2(#2, #1-\xcD)}

    \draw[fill=#9, draw=black] (#1, #2)
        arc[radius=\rA, start angle=\startA, end angle=\endA]
        arc[radius=\rB, start angle=\startB, end angle=\endB]
        arc[radius=\rC, start angle=\startC, end angle=\endC]
        arc[radius=\rD, start angle=\startD, end angle=\endD] -- cycle;
}

\graphicspath{{./}{images/}}

\makeatletter
\def\namedlabel#1#2{\begingroup
   \def\@currentlabel{#2}
   \label{#1}\endgroup
}
\makeatother

\makeindex

\begin{document}

\title[Shimura curves of discriminant 14 and 15 and associated Heun Functions]{Shimura curves of discriminant 14 and 15 and associated Heun Functions} 
\begin{abstract}
    The Shimura curve of discriminant $D$ for $D=14, 15$ is uniformized by a subgroup of an arithmetic quadrilateral Fuchsian group $(2, 2,  2, q)$, where $q=4, 6$.  We relate the generator of the ring of quaternionic modular forms on this Shimura curve to explicit Heun functions for the quadrilateral group. We also discuss how the Picard–Fuchs equation of the associated family of abelian surfaces has solutions that are modular forms on $X^{D} (1)/W_{D}$, where $W_D$ is the full group of Atkin-Lehner involutions. This leads us to completely describe the rational exceptional sets of the associated Heun functions, and the algebraic values attained by the Heun function on these  points, for example

$${\rm He}\left( 81, \frac{1}{2}; \frac{1}{3}, \frac{1}{6}, \frac{1}{2}, \frac{1}{2};-\frac{729}{112}\right)= \left( \frac{2^2.3^3.5^3}{7^5} \right)^{\frac{1}{6}} $$.

\end{abstract}
\author[Harshavardhan Reddy]{Harshavardhan Reddy}
\email{harsha.v.r@gmail.com} 

\author[Devendra Tiwari]{Devendra Tiwari}
\email{devendra9.dev@gmail.com}

%\subjclass[2000]{Primary 20H10; Secondary 51M10}
\keywords{Shimura Curve, Hyperbolic Geometry, Fuchsian groups.}
\date{\today}

\thispagestyle{empty}

\maketitle

\section{Introduction}
The exceptional set of a function of complex variable $f(z)$ is the set of those algebraic complex numbers such that the value of $f(z)$ is also algebraic. Calculating the exceptional sets of transcendental functions is a classical problem. The starting result here is the classical one due to Hermite, Lindemann and Weierstrass establishing that the exceptional set of the exponential function $e^z$ is $\{0\}$. The problem (initiated by Siegel) of the exceptional set associated with the Gauss hypergeometric function $_2F_1(a,b,c;z)$ of one variable has been solved completely by Wolfart using Wustholz's Analytic Subgroup Theorem, see \cite{jw}.  Wolfart showed that a hypergeometric function is algebraic for infinitely many algebraic values of $t$ iff the monodromy group
of the corresponding hypergeometric equation is arithmetic. As an application of Wolfart's result, Beukers and Wolfart  \cite{bw} explicitly determined the  exceptional sets of hypergeometric functions 
with monodromy group isomorphic to $\SL(2, \Z)$, and derived explicit formulas for values of these functions at points in these sets. The same method was extended by Natalia Archinard to compute exceptional sets of hypergeometric functions arising
from the Picard Fuchs equations that correspond to the $(2,3, \infty)$ triangle group. Recently, Yang in \cite{yy3} used relations between hypergeometric functions and certain Borcherds forms to compute special  values of hypergeometric functions, 
where the transcendental factor comes from certain periods associated to CM elliptic curves. Also see the beautiful exposition by \cite{cw} describing the relation of the  Wolfart's work with a special case of a conjecture of Andre–Oort on the distribution of complex multiplication (or special) points on algebraic curves in Shimura varieties. And \cite{pt} for details and an extensive bibliography on on transcendence of special values of modular and hypergeometric functions.

\subsection{Heun Functions and Shimura Curves} Extending the Wolfart's study of the exceptional set for $_2F_1(a,b,c;z)$ Baba and Granath characterize in \cite{bg2}, for the $(2, 4, 6)$ triangle
group, the points where $_2F_1(\frac{1}{24},\frac{5}{24}, \frac{3}{4};z)$ has a fixed transcendental part. For this they related the hypergeometric function to modular forms on an associated Shimura curve $X_6^*(1)$ of discriminant 6, which is uniformized by the $(2,4,6)$ triangle group. They extended this study to Shimura curves $X_{10}^*(1)$ of discriminant 10 in \cite{bg}, this curve is a coarse moduli space of principally polarized abelian surfaces $A_z$ over $\C$ with
quaternionic multiplication by the quaternion algebra $B$ of discriminant 10 over $\Q$. They calculated the exceptional sets for the  Heun function as now the associated differential equation is a Heun equation because the Shimura curve is uniformized by a quadrilateral Fuchsian group $Γ^*$ of signature $(0; 2, 2, 2, 3)$. For the Hauptmodul $\tau(z)$ of $Γ^*$ they establish that this Heun function $h(\tau)$ takes values 

\begin{theorem}[\cite{bg}]
    
$h(\tau(z)) \sim \Omega_{\Delta}/\Omega_{3}$ with $\tau(z) \in \overline{\mathbb{Q}}$ if and only if $A_z$ has CM by $\mathbb{Q}(\sqrt{-\Delta})$. 

\end{theorem}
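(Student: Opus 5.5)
The plan follows the Wolfart / Beukers–Wolfart method, adapted by Baba–Granath to the quaternionic setting. The first step is to realise $h(\tau(z))$ as a quaternionic modular form. The Heun equation attached to $Γ^*$ is the Picard–Fuchs equation of the family $A_z$. So a suitable solution $h$, composed with the Hauptmodul $\tau$, becomes a weight-one (or weight-$k$, after a fixed power) meromorphic modular form $f(z)=h(\tau(z))$ on $Γ^*$, up to an explicit algebraic factor. More precisely, $f(z)$ is a ratio $\omega(z)/\omega_0$, where $\omega(z)$ is a period of a holomorphic differential on $A_z$ normalised over $\overline{\mathbb{Q}}$. Here $\Omega_3$ enters as the period at the elliptic point of order $3$, where $\tau$ is normalised. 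I would state this as an identity $h(\tau(z))^{k} = c\, F(z)/\Phi(z)$, with $F$ a generator of the ring of quaternionic modular forms with algebraic Fourier–Taylor coefficients at a CM point, and $\Phi$ algebraic.

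The second step is the direction "CM $\Rightarrow$ value $\sim \Omega_\Delta/\Omega_3$". Suppose $A_z$ has CM by $\mathbb{Q}(\sqrt{-\Delta})$. Then $z$ is a CM point, so $A_z$ is isogenous to $E\times E$ with $E$ a CM elliptic curve with CM by that field, and $\tau(z)\in\overline{\mathbb{Q}}$ because Shimura's reciprocity makes CM points algebraic on the canonical model. The periods of $A_z$ are then $\overline{\mathbb{Q}}$-multiples of the Chowla–Selberg period $\Omega_\Delta$. Dividing by the normalising period gives $h(\tau(z))\sim \Omega_\Delta/\Omega_3$.

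The third step is the converse, which is the main obstacle. Assume $\tau(z)\in\overline{\mathbb{Q}}$ and $h(\tau(z))=\beta\,\Omega_\Delta/\Omega_3$ with $\beta\in\overline{\mathbb{Q}}^\times$. Then $A_z$ is defined over $\overline{\mathbb{Q}}$, and some period of a holomorphic differential on $A_z$ is an algebraic multiple of a period of the CM elliptic curve $E_\Delta$. Applying Wüstholz's analytic subgroup theorem to $A_z\times E_\Delta$ yields a nontrivial morphism between $E_\Delta$ and a factor of $A_z$. Since $A_z$ has QM by the division algebra $B$, it is either simple or isogenous to $E\times E$. So $A_z\sim E_\Delta^2$, which forces CM by $\mathbb{Q}(\sqrt{-\Delta})$.

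The delicate part is keeping track of which period $h$ actually picks out, so that the linear relation given to Wüstholz's theorem is genuinely nontrivial. This needs the comparison between the Heun solution's local normalisation and the algebraic de Rham class. I would handle it by an explicit calculation at the order-$3$ elliptic point, checking that its CM field is $\mathbb{Q}(\sqrt{-3})$, which justifies the notation $\Omega_3$.
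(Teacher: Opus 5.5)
The paper does not prove this statement. It is quoted from \cite{bg}, and the only supporting argument in the paper is the sketch in \S4.1, which covers just the direction ``CM $\Rightarrow$ value''. Your outline is sound and follows the standard Wolfart--W\"ustholz strategy, but it is organised differently from that sketch.

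The paper uses the exact local identity of the Proposition in \S3.3, $h(\tau(z))^4=\frac{(z-\bar z_b)^4}{(z_b-\bar z_b)^4}\,\frac{g(z)}{g(z_b)}$ with $g=(D\tau)^2/P(\tau)$. It combines this with Shimura's results that $D\tau\sim\Omega_\Delta^2$ at a CM point, and $D^e\tau\sim\Omega_\Delta^{2e}$ for the first non-vanishing derivative at an elliptic point. Since CM points $z$ are algebraic, this gives $g(z)\sim\Omega_\Delta^4$ and $g(z_b)\sim\Omega_b^4$. Taking fourth roots finishes the argument, with no need to know which cycle $h$ integrates over.

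You instead read $h$ directly as a ratio of periods and apply Shimura's period theorem. That costs you the Picard--Fuchs bookkeeping, but it is what makes the converse possible. At a non-CM point with $\tau(z)$ algebraic, $z$ is transcendental, so the factor $z-\bar z_b$ in the modular identity gives no control. One has to pass to periods of $A_z$ and use the analytic subgroup theorem as you do; the paper leaves this direction entirely to \cite{bg}.

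Two points in your write-up need correcting.

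First, $h(\tau(z))$ is not a modular form on $\Gamma^*$ up to an algebraic constant. Locally it is $(z-\bar z_b)$ times a fractional-weight form, and that factor is algebraic only when $z$ is.

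Second, your proposed treatment of the ``delicate part'' misses the target. Checking that the order-$3$ point has CM by $\mathbb{Q}(\sqrt{-3})$ only identifies the denominator $\Omega_3$. What you need is that $h$ is the solution invariant under the finite-order local monodromy at the base point. That monodromy acts on $H_1(A_z,\mathbb{Q})$ by a rational symplectic automorphism, so its invariant part is spanned by rational cycles. Hence $h$ is, up to an algebraic function of $\tau$, a period $\int_{\gamma_\tau}\omega_\tau$ of a $\overline{\mathbb{Q}}(\tau)$-rational holomorphic form over a flat rational cycle. It is normalised by its value at the base point, which is a period of the CM surface $A_{z_b}$. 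If an algebraic twist of $\omega$ makes $\gamma$ only a $\overline{\mathbb{Q}}$-combination of integral cycles, apply W\"ustholz's theorem on $A_z^n\times E_\Delta$ instead of $A_z\times E_\Delta$.

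With this in place, the linear relation you feed to W\"ustholz is automatically nontrivial. It only requires $h(\tau(z))\neq 0$, which is part of the hypothesis.
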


This result relates the transcendental values of Heun functions to absolute CM periods $\Omega_{\Delta}$. At CM points, the Heun value is algebraic up to a fixed CM period ratio.  Using Shimura's results in \cite{sh1}, Baba and Granath have calculated the exceptional sets and also the exceptional values for the Heun functions associated to the Shimura curve $X_{10}^*(1)$. 

\subsection{Main Result} In this work we extend the calculation of exceptional set and algebraic values in \cite{bg} to remaining genus 0 curves uniformized by a subgroup of an arithmetic quadrilateral Fuchsian group of signature $(2,2,2, q)$, where $q=4,6$, these are the Shimura curves with discriminant 14 and 15. By the rational exception set, we mean the intersection of the exceptional set with $\Q \cup \{\infty\}$.

 \newtheorem*{mainthm}{Main Result}
\begin{mainthm}
Let $X_{14}^{*}(1)$ and $X_{15}^{*}(1)$ be the Shimura curves uniformized by the arithmetic quadrilateral Fuchsian groups of signatures $(2,2,2,4)$ and $(2,2,2,6)$, respectively, and let $\tau$ denote the uniformizing Hauptmodul.
\begin{enumerate}
    \item For $D=14$, the graded ring of quaternionic modular forms is generated by forms $g_4, g_8, g_{14}$. The associated differential equation with rational coefficients has a scaled Heun function $F(τ)=He(\frac{r_2}{r_1}, \frac{-3}{256.r_1}, \frac{1}{8},\frac{3}{8},\frac{1}{2},\frac{1}{2},τ/r_1)$  as a solution. The form $g_4^{1/4}$ is also a  solution to this differential equation and the rational exceptional set of F given strictly by:
    $$ E \cap \mathbb{Q} = \{0,\frac{75}{16}\}, \quad \text{with} \quad F\left(\frac{75}{16}\right)= \frac{\sqrt{7}}{3}$$
    \item For $D=15$, the graded ring of modular forms is generated by $h_4, h_8, h_{10}, h_{12}$. The normalized form $h_4^{1/4}$ satisfies the associated Heun equation, and the rational exceptional set of its corresponding Heun function is exactly:
    $$ E \cap \mathbb{Q} = \left\{ 0, 243, -\frac{729}{112} \right\}. $$
    Furthermore, at the exceptional CM points $\tau = 243$ and $\tau = -\frac{729}{112}$, the Heun function evaluates explicitly to the following algebraic numbers:
    $$ He\left(81, \frac{1}{2}; \frac{1}{3}, \frac{1}{6}, \frac{1}{2}, \frac{1}{2}; 243\right) = \left(\frac{\sqrt{3}}{2} + \frac{i}{4}\right), $$
    $$ He\left(81, \frac{1}{2}; \frac{1}{3}, \frac{1}{6}, \frac{1}{2}, \frac{1}{2}; -\frac{729}{112}\right) = \left(\frac{2^2 \cdot 3^3 \cdot 5^3}{7^5}\right)^{1/6}. $$
\end{enumerate}
\end{mainthm}
 
 \subsection{Idea of proof} 
 The main difficulty in extending the results from D=10 to the D=14,15 cases is that we do not necessarily have the existence of a factorization of the covering map $τ:ℍ→X_{D}^*(1)$ as $ℍ\xrightarrow[]{q}X^*(N)\xrightarrow[]{t}X^*(1)$. This factorization, when it exists, helps us to cancel terms involving the derivative and replace them with more tractable terms involving $Dt$. So, we develop an alternative approach based on the Yang polynomial as defined in \cite{yy1}.
 \medskip{}
 We first identify the arithmetic quadrilateral Fuchsian groups uniformizing the genus-zero Shimura curves $X_{14}^*(1)$ and $X_{15}^*(1)$ in Section 2. Then we find an explicit expression for the modular form using Yang’s description \cite{yy}. On the way, we describe the ring of modular forms, which can be of independent interest. We show that fractional-weight modular forms ($g_4^{1/4}$ and $h_4^{1/4}$ for $X_{14}^*(1)$ and $X_{15}^*(1)$ respectively) serve as explicit local solutions in a neighbourhood around the basepoint, to the respective Heun equations. This step later enables us to calculate the Heun function in terms of this modular form in this neighbourhood. In Section 4 we determine rational exceptional sets by invoking Shimura's results on complex multiplication (CM) period relations, establishing that the Heun function evaluates to an algebraic number precisely when the underlying transcendence degree drops at specialized CM points. This allows us to isolate the rational exceptional loci using Elkies' classification tables. Finally, in Section 5, the exact exceptional values are explicitly computed by deploying two distinct methods tailored to the geometry of each CM point: A geometric factorization, analogous to approach of \cite{bg}, through the study of an intermediate level-2 cover $X_{15}^*(2)$ (for $\tau=243$). And for $\tau = -\frac{729}{112}$, we follow an algebraic framework utilizing Yang polynomials $\Psi_p$, studied in \cite{yy2}, constructed from explicit Hecke operator matrices combined with localized numerical root isolation to eliminate spurious branches.

%\medskip  An alternat Also, a dimensional mismatch implies that we cant directly use the modular polynomial to compute this value.  

%todo- explain dimensional mismatch

\begin{comment}
    
\medskip To find the value at the CM point corresponding to 243, we use a level 2 covering of the Shimura curve ($X_{15}^*(2)$) alongside an Atkin-Lehner involution. This allows us to compute the translated value of $h_4$ algebraically, utilizing a mathematical limit to resolve the multiplicity of the Hauptmodul at the base point $z_{12}$.

To evaluate the Heun function at $-729/112$, we employ (a new approach using) Yang's $\Psi_p$ modular polynomials. These polynomials relate the ratio of a modular form at two different points connected by a Hecke operator directly to the Hauptmodul. By factoring a norm-70 group element into prime-norm components ($\gamma_7 * w_5 * \gamma_2$), we use the Hecke operators to generate the polynomials $\Psi_2$ and $\Psi_7$. This allows to chain the modular form ratios together into a product and computationally solve for the exact algebraic root.

\end{comment}

\section{Preliminaries}

\medskip The \emph{transcendental part} of a nonzero complex number $z \in ℂ$ is its image in $ℂ^*/\bar{ℚ}^*$. 
We write $a\sim b$ for two numbers $a,b∊ℂ^*$ when they have the same transcendental part or equivalently, if their ratio is algebraic. We sometimes take any representative in the equivalence class of $z$, and refer to it as the transcendental part of $z$.

Shimura proved that the period at a CM point 
with fundamental discriminant $Δ$ has a transcendental part $\sqrt{π}Ω_Δ$ which only depends on the discriminant $Δ$. 
This helps to algebraically relate the values of the periods at CM points with the same discriminant. The exceptional set of Heun function is exactly the set of CM points whose fundamental discriminant matches that for a particular CM base point. CM points with a different fundamental discriminant are also characterized as the set of points where the transcendental part is a given fixed ratio involving absolute periods.

\subsection{Quaternionic Modular forms on Shimura Curves}

Let $B_D$ be an indefinite quaternion algebra over $\mathbb{Q}$ of discriminant $D > 1$, a \textit{maximal order} is an order that is not properly contained in any other order. An \textit{Eichler order} is defined as the intersection of two maximal orders. See \cite[ch.2]{ab} and \cite[ch2]{ne} for more detailed introduction to Shimura curves.

\medskip Let $B_{14}$ be the quaternion algebra with symbol $(-2,7)_ℚ$. From \cite{ne} we know that it is the $ℚ$-algebra generated by $b, g$ with relations given as $b^2=-2, g^2=7, bg=-gb$. $\OO_{14}=ℤ\left[b,\frac{(1+g)}{2}\right]$ is a maximal order in $B_{14}$. We choose the embedding $i: B_{14} \rightarrow M(2, \R)$ to be

$$    b \mapsto \begin{pmatrix} 0 & 1 \\ -2 & 0 \end{pmatrix} \quad g \mapsto \begin{pmatrix} \sqrt{7} & 0 \\ 0 & -\sqrt{7} \end{pmatrix}$$

\medskip We define unit group of $\OO_{14}$

$$ Γ = \{ x \in \OO_{14} \; |\;  {\rm nrd}(x) = 1\}/\{\pm1\} \quad \text{and} \; \; Γ^* = \{x \in \B_{14} \; | \; x\OO_{14} = \OO_{14} x, \; {\rm nrd}(x) > 0\}/\Q^*$$

\medskip The complex curve,  $X_{14}(1) = ℍ/Γ$ and $X^*_{14}(1) = ℍ/Γ^*$ are Shimura curve of discriminant 14 over $\Q$. The group $Γ^*$ is the Fuchsian group with signature $(0; 2, 2, 2, 4)$ with elliptic points $z_4, z_8, z_{56}$ and $z_{56b}$ where elliptic point $z_{\Delta}$ has CM by $\Q(\sqrt{-\Delta})$. We give the orders of the elliptic points, and the value of $τ$ at these points below.

$$\begin{matrix} z \; & z_4 & z_8 & z_{56} & z_{56b}   \\ \hline
|{\rm Stab}(z)| \;  & 4 & 2 & 2 & 2 \\ \hline
τ(z) \; & \infty & 0 & \frac{-13 + 7\sqrt{-7}}{32} & \frac{-13 - 7\sqrt{-7}}{32} \end{matrix}$$

$$z_4=\frac{i-\sqrt{7}}{4}, \quad z_8=\frac{i}{\sqrt{2}}, \quad z_{56}=\frac{(1+\sqrt{7})(1+i\sqrt{2})}{6}, \quad z_{56b}=\frac{(1+\sqrt{7})(-1+i\sqrt{2})}{6}$$

\medskip Let $B_{15}$ be the quaternion algebra with symbol $(-3,5)_ℚ$ generated by $c, e$ with relations given as $c^2=-3, e^2=5, ce=-ec$. We choose $\OO_{15}=ℤ\left[e,\frac{1+c}{2}\right]$ as our maximal order. We choose the embedding $i: B_{15} \rightarrow M(2, \R)$ to be

$$    c \mapsto \begin{pmatrix} 0 & 1 \\ -3 & 0 \end{pmatrix} \quad e \mapsto \begin{pmatrix} \sqrt{5} & 0 \\ 0 & -\sqrt{5} \end{pmatrix}$$

\medskip We define $Γ$ and $Γ^*$ in the same way as for $D=14$ case and also the corresponding complex curves $X^*_{15}(1)$ and $X^*_{15}(1)$. \medskip The group $Γ^*$ is the Fuchsian group with signature $(0; 2, 2, 2, 6)$ with elliptic points $z_4, z_8, z_{15}$ and $z_{60}$ where elliptic point $z_{\Delta}$ has CM by $\Q(\sqrt{-\Delta})$.

$$\begin{matrix} z  & z_{3} & z_{12} & z_{15} & z_{60}   \\ \hline
|{\rm Stab}(z)| \;  & 6 & 2 & 2 & 2 \\ \hline
τ(z)  & \infty & 0 & 81 & 1 \end{matrix}$$

$$z_3=\frac{i}{\sqrt{3}} \quad z_{12}=\frac{\sqrt{3}(\sqrt{15} + i)}{12} \quad z_{15}= \frac{3+\sqrt{3}i}{3(\sqrt{5}-1)} \quad z_{60}=\frac{3\sqrt{3}+2i}{\sqrt{3}(4\sqrt{5}-7)}$$

\begin{comment}

\medskip Since $B$ is split at $\infty$, we fix an isomorphism $i_\infty: B \otimes_{\mathbb{Q}} \mathbb{R} \xrightarrow{\sim} M_2(\mathbb{R})$. Let $\mathcal{O}_1^\times = \{ \gamma \in \mathcal{O}^\times : \mathrm{nrd}(\gamma) = 1 \}$ be the group of units in $\mathcal{O}$ with reduced norm $1$. Via the isomorphism $i_\infty$, we can embed this group into $SL_2(\mathbb{R})$. Taking this image modulo its center defines an arithmetic Fuchsian group:
$$\Gamma(1) = \frac{i_\infty(\mathcal{O}_1^\times)}{\{\pm 1\}} \subset PSL_2(\mathbb{R})$$

And the compact quotient of the upper half-plane $\mathcal{H}$

$$X(1)_\mathbb{C} = \Gamma(1) \backslash \mathcal{H}$$ is the associated Shimura curve. The compact Riemann surface associated to a discrete subgroup of
$SL_2(\mathbb{R})$ commensurable to some $\mathcal{O}_1^\times$ will also be called a Shimura curve. 
\end{comment}

\medskip {\rm \bf CM points of Shimura Curves:} For  an Eichler order $\OO$ of level $N$ in $B$ fix an embedding $i$ of $B$ in to $M(2, \R)$. Let $\K$ be an imaginary quadratic field and $R$ an order of discriminant $d_R = f^2d_{\K}$ in $\K$. Following Eichler, we say an embedding $\phi : R \rightarrow \OO$ is optimal if $\phi(\K) \cap \OO = \phi(R)$.  
Now the action of the set $i\circ \phi(R / \{0\}) \subset {\rm GL}^{+}(2, \R)$ on the upper half-plane $\mathcal{H}$ fixes precisely one point $τ_{\phi}$. Such a point is called a CM-point (point with complex multiplication) of discriminant $d_R$. 

\medskip{\rm \bf Quaternionic Modular forms:} Let $X$ be a genus-zero Shimura curve with Hauptmodul $t(\tau)$. Following Yang \cite{yy}, automorphic forms on $X$ can be described explicitly in terms of solutions of the Schwarzian differential equation associated to $t$. 

\begin{theorem}[\cite{yy}]
    
Assume that $X(\mathcal O)$ has genus 0 with signature $(0; e_1, \dots, e_r)$ and the corresponding elliptic points $t_i$. Let $t(\tau)$ be a Hauptmodul and set $a_i = t(\tau_i)$. For a positive even integer $k \geq 4$, let

\begin{equation}
d_k = {\rm dim} S_k(\mathcal O) = 1 - k + 
\sum_{i=1}^r \left \lfloor \frac{k}{2} \left( 1 - \frac{1}{e_i} \right) \right \rfloor 
\end{equation}

be the dimension of the space of modular forms on weight $k$ on $\Gamma(\mathcal O)$. Then a basis for $S_k(\mathcal O)$

\begin{equation}
 t(\tau)^j t'(\tau)^{\frac{k}{2}} \prod_{i=1, a_i \neq \infty}^r (t(\tau) - a_i) ^{- \left \lfloor k \left( 1 - \frac{1}{e_i} \right) \right \rfloor } \quad j=0, 1, \dots d_k-1.
\end{equation}

\end{theorem}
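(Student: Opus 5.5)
The plan is the standard divisor-counting argument on the compact genus-zero curve $X=X(\mathcal O)$, with the Hauptmodul $t$ as a global coordinate. The key object is the weight-$2$ meromorphic form $t'(\tau)$: since $t$ is $\Gamma$-invariant, differentiating $t(\gamma\tau)=t(\tau)$ gives $t'(\gamma\tau)(c\tau+d)^{-2}=t'(\tau)$. So $t'^{k/2}$ is a meromorphic modular form of weight $k$. Every weight-$k$ form $f$ can then be written as $f=R(t)\,t'^{k/2}$, where $R=f/t'^{k/2}$ is a $\Gamma$-invariant meromorphic function on the genus-zero curve, hence a rational function of $t$. The task therefore reduces to determining exactly which rational functions $R$ make $R(t)\,t'^{k/2}$ holomorphic on $\mathcal H$ (the curve is compact, so there are no cusps when $D>1$), and then reading off a basis.

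First I compute the local behaviour of $t'$. At a non-elliptic point $\tau_0$, $t-t(\tau_0)$ is a local coordinate, so $t'$ is holomorphic and nonvanishing there. At an elliptic point $\tau_i$ of order $e_i$ with $a_i\neq\infty$, write $t-a_i\sim c\,w^{e_i}$, where $w=(\tau-\tau_i)/(\tau-\bar\tau_i)$ is the local uniformizer on $\mathcal H$. Then $t'$ vanishes to order $e_i-1$ in $\tau$, i.e. to order $1-1/e_i$ measured in the coordinate $t-a_i$. For the form $R(t)t'^{k/2}$ to be holomorphic at $\tau_i$, $R$ may have a pole at $a_i$ of order $m$ only if
\[ e_i m\le \tfrac{k}{2}(e_i-1), \qquad\text{i.e.}\qquad m\le\left\lfloor \tfrac{k}{2}\bigl(1-\tfrac1{e_i}\bigr)\right\rfloor .\]
At $t=\infty$ (either a regular point or an elliptic point with $a_i=\infty$), $t$ has a pole, so in the coordinate $s=1/t$ we get $t'=-s^{-2}s'$. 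Hence $t'$ has a pole of order $2$ in $s$, corrected by $1-1/e_i$ when the point is elliptic. Holomorphy at that point then bounds $\deg R$ (counting the allowed poles at the finite $a_i$) from above.

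Putting these together, the allowable $R$ are exactly
\[ R(t)=P(t)\prod_{a_i\neq\infty}(t-a_i)^{-\lfloor\cdot\rfloor_i}, \]
with $P$ a polynomial of degree at most $\sum_i\lfloor\frac{k}{2}(1-\frac1{e_i})\rfloor-k$, where the sum runs over all $i$, including the point at infinity. This gives the dimension $d_k$ and the basis $t^j t'^{k/2}\prod(t-a_i)^{-\lfloor\cdot\rfloor}$ for $j=0,\dots,d_k-1$, matching the formula in the statement. Linear independence is immediate because distinct $j$ give distinct powers of $t$. Surjectivity is the reduction above: any holomorphic $f$ has such an $R$.

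The main obstacle is getting the exponents in the displayed basis to match this analysis, together with the bookkeeping at $\infty$. The statement's product uses $\lfloor k(1-1/e_i)\rfloor$ rather than $\lfloor \frac{k}{2}(1-1/e_i)\rfloor$, so I will first check whether this reflects a different normalization of the weight or of the local exponent at $a_i$. Concretely, I will redo the computation $t'\sim w^{e_i-1}$, convert it to a vanishing order in $t-a_i$, and recheck the degree count at $t=\infty$. I will also verify the claimed basis against small cases such as the $(2,2,2,4)$ group with $k=4$, and adjust the convention so that the exponent and the dimension $d_k$ agree. The remaining subtle point is handling an elliptic point at $\infty$, which is the case for $z_4$ and $z_3$ in the tables. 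There the contribution $\lfloor\frac{k}{2}(1-\frac1{e})\rfloor$ enters the degree bound on $P$ rather than the product, which is why the statement's product excludes $a_i=\infty$.
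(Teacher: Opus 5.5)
The paper gives no proof of this statement; it simply quotes it from \cite{yy}. Your argument is the standard proof of this result, and your local computations are correct. The steps are: write $f=R(t)\,t'^{k/2}$ with $R\in\mathbb{C}(t)$; bound the pole of $R$ at each finite $a_i$ using $t-a_i\sim c\,w^{e_i}$; then bound $\deg R$ from the behaviour at $t=\infty$. Two points should be tightened.

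First, do not treat the exponent as something to be ``adjusted'' by a change of convention. Your inequality $e_i m\le\frac{k}{2}(e_i-1)$ already settles the question: the printed $\lfloor k(1-\frac{1}{e_i})\rfloor$ is a misprint for $\lfloor\frac{k}{2}(1-\frac{1}{e_i})\rfloor$. Once $t'^{k/2}$ and the stated $d_k$ are fixed, no renormalization rescues the printed exponent. For signature $(0;2,2,2,4)$ and $k=4$, the printed element $t'^2\prod_{a_i\neq\infty}(t-a_i)^{-2}$ has order $-4+2=-2$ in $w$ at each order-$2$ point, so it is not holomorphic. The paper's own $g_4$ and $h_4$ use the exponent $1=\lfloor\frac{4}{2}\cdot\frac{1}{2}\rfloor$. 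State the corrected version explicitly and prove that.

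Second, your count says the polynomials $P$ of degree at most $N:=\sum_i\lfloor\frac{k}{2}(1-\frac{1}{e_i})\rfloor-k$, summed over all elliptic points including the one at $t=\infty$, form a space of dimension $N+1=d_k$. This is only valid when $N\ge -1$; otherwise the space is $0$ while $d_k<0$. This is exactly where the hypothesis $k\ge 4$ enters: for $k=2$ one gets $N=-2$, and you never use the hypothesis. The check is short:
\begin{itemize}
\item Since $k/2\in\mathbb{Z}$, the number $\frac{k}{2}(1-\frac{1}{e})$ has denominator dividing $e$, so $\lfloor\frac{k}{2}(1-\frac{1}{e})\rfloor\ge(\frac{k}{2}-1)(1-\frac{1}{e})$.
\item Summing over the elliptic points and using hyperbolicity, $\sum_i(1-\frac{1}{e_i})>2$, gives $\sum_i\lfloor\frac{k}{2}(1-\frac{1}{e_i})\rfloor>k-2$ whenever $k\ge 4$.
\item Hence $N\ge -1$.
\end{itemize}
With these two additions the proof is complete.
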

\medskip This approach makes explicit computation of automorphic forms and Hecke operators possible. By utilizing the Jacquet–Langlands correspondence and explicit covers between Shimura curves, Yang  also devised a method to compute Hecke operators with respect to the explicitly given basis of modular forms. The reader who is unfamiliar with the relation between automorphic forms and differential equations may like to see the proposition 5 in \cite{for} and its proof in \cite{yy2}.

\subsection{Shimura curves and Moduli} A \emph{Shimura curve} over $\mathbb{Q}$ is a natural geometric generalization of the classical modular curves. While classical modular curves (like $X_0(N)$) act as moduli spaces that parameterize elliptic curves with specific level structures, a Shimura curve $X$ associated to an indefinite quaternion algebra $B$ over $\mathbb{Q}$ (with discriminant $D > 1$) parameterizes triples $(A, \mathcal{L}, \iota)$, where $A$ is an abelian surface (which are two-dimensional complex tori), $\mathcal{L}$ is a principal polarization, and $\iota: \mathcal{O} \hookrightarrow \text{End}(A)$ is a ring embedding such that the Rosati involution induced by $\mathcal{L}$ restricts to the canonical main involution on $B$. Rosati involution on $\text{End}(A)$ induced by the principal polarization matches the canonical main involution (conjugation) of the quaternion algebra B. The last structure is known as ``quaternionic multiplication'' (QM). 

\medskip For $z \in X$ away from the elliptic points on $X$, in the cases considered here, the associated principally polarized abelian surfaces  $A_z$ may be realized as Jacobians $\operatorname{Pic}^0(C_z)$ of  a non-singular genus 2 curves $C_z$ by Torelli's theorem. Further, Mestre's construction gives us the coefficients of the hyperelliptic equation of $C_z$ as algebraic functions of the Igusa-Clebsch invariants of the curve. Hence, the family of genus 2 curves can be defined by an affine equation of the form $y^2 = f_z(x)$, where $f_z(x)$ is a sextic polynomial whose coefficients are rational functions of the uniformizing Hauptmodul $\tau(z)$. 

%Locally, the Igusa-Clebsch invariants of these abelian surfaces can be expressed as quaternionic modular forms, allowing one to construct the hyperelliptic genus 2 curves $y^2 = f_z(x)$ explicitly.

\subsection{Geometry of the Heun Differential Equation} They are generalization of the hypergeometric equation (with three regular singularities) and have four regular singular points, they arise naturally in geometric moduli problems \cite{mr}. As the family of principally polarized abelian surfaces $A_z$ traverses the base Shimura curve, the relative de Rham cohomology $\mathcal H^1_{dR}=R^1\pi_*\Omega^\bullet_{\mathcal A/X}$ of this family of abelian surfaces carries the Gauss–Manin connection
$$\nabla:\mathcal H^1_{dR}\to \mathcal H^1_{dR}\otimes\Omega_X^1.$$

The Hodge filtration on $\mathcal H^1_{dR}$ varies holomorphically over the base Shimura curve, giving rise to a polarized variation of Hodge structure. The flat Gauss-Manin connection on this VHS annihilates the periods of the abelian surfaces, yielding a rigid linear differential equation known as the Picard-Fuchs equation. By choosing suitable holomorphic differential forms $\omega$ on the associated genus 2 curves $C_z$ (such as $\omega = \tau^{1/6}dx/y$) and integrating them along a continuously varying homology cycle $\gamma_z \in H_1(C_z, \mathbb{Z})$, the resulting periods span the solution space of this differential equation. 

\medskip Because the base Shimura curves of discriminants 14 and 15 are uniformized by quadrilateral arithmetic Fuchsian groups, this Picard-Fuchs equation possesses exactly four regular singularities. A second-order linear differential equation with exactly four regular singularities is, by definition, the Heun differential equation. Placing the four regular singularities at $0, 1, a$, and $\infty$ via a Möbius transformation, the standard form of the Heun equation for a function $w(\tau)$ is given by:

\begin{equation}
 w'' + \left(\frac{γ}{τ} + \frac{δ}{τ -1} + \frac{ε}{τ - a} \right) w' + \frac{αβτ - q}{τ(τ - 1)(τ - a)} w = 0  \label{heun}
\end{equation}

where the parameters satisfy the Fuchsian relation $\alpha + \beta + 1 = \gamma + \delta + \epsilon$, and the singularity $a \neq 0, 1$. The local monodromy of the periods around the singular fibers (the elliptic points) directly determines the parameters $\alpha, \beta, \gamma$, and $\delta$, intrinsically linking the differential equation to the hyperbolic geometry of the Fuchsian group. The unique local power series solution near $\tau=0$ with a constant term of $1$ is defined as the Heun function, denoted $He(a,q;\alpha,\beta,\gamma,\delta;\tau)$.

 $$   {\rm He}(a, q; α, β, γ, δ ; τ)  = \sum_{j =0}^{\infty} c_j τ^j  $$

We say that the linear differential equation comes from geometry
if there is a proper family of algebraic varieties  $X \rightarrow \mathbb{P}^1$  over $\C$ and a differential form $\omega \in H^i_{dR}(X/\mathbb{P}^1)$  such that the periods $\int_{\delta_z} \omega$ where $\delta_z \in H_i(X_z, \Z)$ is a continuous family of
cycles, span the solutions space of the linear differential equation. Such linear differential equations are also called Picard-Fuchs equations.

\subsection{Heun parameters for $X^*_{14}(1)$ and $X^*_{15}(1)$}
For the discriminant 14 case, we have the following differential equation with coefficients in $ℚ(τ)$ as given in equation [71] in \cite{ne}.

\begin{equation}
τ(16τ^2+13τ+8)w''+(24τ^2+13τ+4)w'+\left(\frac{3}{4}τ+\frac{3}{16}\right)f=0 \label{X14eqn}
\end{equation}

This equation has 4 singular points $0, r_1, r_2, ∞$ where $r_1, r_2$ are the roots of $(16τ^2+13τ+8)=0$. By substituting $τ$ with $r_1.τ$, this equation can be transformed into the standard Heun equation( \eqref{heun}), so that the singular points move to $0,1,r_2/r_1, ∞$. The parameters of the Heun equation as follows.
$$γ = δ = ε = \frac{1}{2}, \quad a=\frac{r_2}{r_1}, q=\frac{-3}{(256\cdot r_1)}, \; \; α =\frac{1}{8}, \; \; β =\frac{3}{8} \quad 
$$

$r_1 = (-13 - 7\sqrt{-7})/32 \quad r_2 = (-13 + 7\sqrt{-7})/32$. 
\medskip{}
\\[0.3cm]
When finding the rational locus of the exceptional set, we focus on a solution $F(τ)$for \eqref{X14eqn} as this is the equation which is defined over $ℚ$. $F(τ)$ is defined as the scaled Heun function, $$F(τ)=He(\frac{r_2}{r_1}, \frac{-3}{256.r_1}, \frac{1}{8},\frac{3}{8},\frac{1}{2},\frac{1}{2},τ/r_1)$$

For $X_{15}^*(1)$, equation [75] in \cite{ne} gives us a standard Heun equation with the parameters,

$$γ = δ = ε = \frac{1}{2}, a = 81, q=\frac{1}{2}; \; \; \alpha =\frac{1}{3}, \; \; \beta =\frac{1}{6}$$

\section{Shimura curve of Discriminant 14 and 15}

In this section, we construct explicit generators for the graded ring of modular forms on $X_{14}^*(1)$ and $X_{15}^*(1)$ using Yang’s result for automorphic forms on genus-zero Shimura curves. The main result in this section is that a weight $1$ modular form satisfies the Heun differential equation. This allows us to give an explicit local modular expression for the Heun function near the CM basepoint $z_8$ for $D=14$ and $z=12$ for $D=15$.

\begin{figure}[!h]
\centering
\begin{tikzpicture}[scale=5]

% ---------- User input: set the four vertices (xi, yi) ----------
\def\xA{-0.661437827766148}
\def\yA{0.25} %z4
\def\xB{0}
\def\yB{0.707106781186548} %z8
\def\xC{0.607625218510765}
\def\yC{0.85931182485783} %z56
\def\xD{-0.607625218510765}
\def\yD{0.5931182485783}  %z56b
\def\xAp{-0.888806} \def\yAp {0.319276} % reflection of A in CD
\def\xBp{-0.125462} \def\yBp {1.264807} % reflection of B in CD
\def\xCp{-0.4213780777} \def\yCp {0.4113177331} % reflection of A in CD
\def\xDp{0.274291885} \def\yDp {0.3879073041} % reflection of B in CD
%$C' \approx (-0.4213780777, 0.4113177331)$$D' \approx (0.2742918852, 0.3879073041)$
% -----------------------------------------------------------------

% real axis
\draw[->] (-1,0) -- (1,0) node[right] {$ℝ$};
\draw[] (0,1) -- (0,0) node [below] {$0$};
\node at (0,1.05) {$1$};
% mark the points
\coordinate (A) at (\xA,\yA);
\coordinate (B) at (\xB,\yB);
\coordinate (C) at (\xC,\yC);
\coordinate (D) at (\xD,\yD);
\coordinate (C') at (\xCp, \yCp);
\coordinate (D') at (\xDp, \yDp);

% draw the four edges (A->B->C->D->A)
\hypquad(\xA,\yA)(\xB,\yB)(\xC,\yC)(\xD,\yD){white!40};
\hypquad(\xA,\yA)(\xB,\yB)(\xDp,\yDp)(\xCp,\yCp){gray!40};
%\drawgeo{\xA}{\yA}{\xB}{\yB}
%\drawgeo{\xB}{\yB}{\xC}{\yC}
%\drawgeo{\xC}{\yC}{\xD}{\yD}
%\drawgeo{\xD}{\yD}{\xA}{\yA}
%\drawgeo{\xC}{\xAp}{\yC}{\yAp}
%\drawgeo{\xAp}{\xBp}{\yAp}{\yBp}
%\drawgeo{\xBp}{\xC}{\yBp}{\yC}

% draw and label vertices
\foreach \pt/\name in {A/$z_4$, B/$z_{8}$, D/$z_{56}$, C'/$$, D'/$$}{
  \fill (\pt) circle (0.4pt);
  \node[below left,font=\large] at (\pt) {\name};
}

\fill (C) circle (0.4pt);
\node[below right,font=\large] at (C) {$z_{56b}$};
\end{tikzpicture}
\caption{Fundamental domain for $X^*_{14}(1)$}
\end{figure}

\begin{figure}[!h]
\begin{tikzpicture}[scale=5, xshift=0cm]

% ---------- User input: set the four vertices (xi, yi) ----------
\def\xA{0}
\def\yA{0.577350269189626} %z3
\def\xB{0.559016994374947}
\def\yB{0.144337567297406} %z12
\def\xC{0.809016994374947}
\def\yC{0.467086179481358} %z15
\def\xD{1.54299405580637}
\def\yD{0.593898688985200}  %z60
\def\xCp{0.3090169943} \def\yCp{0.1784110448} 
\def\xDp{0.1881553461} \def\yDp{0.0724210264} 
\def\xz{0.6716099265} \def \yz{0.0948277073}

%\def\xz48{0} \def \yz48{- 1.51152262815234}

% -----------------------------------------------------------------

% real axis
\draw[->] (-0.2,0) -- (1.8,0) node[right] {$\mathbb R$};
\draw[] (0,1) -- (0,0) node [below] {$0$};
\node at (0,1.05) {$1$};
% mark the points
\hypquad(\xA,\yA)(\xB,\yB)(\xC,\yC)(\xD,\yD){white!40}
\coordinate (A) at (\xA,\yA);
\coordinate (B) at (\xB,\yB);
\coordinate (C) at (\xC,\yC);
\coordinate (D) at (\xD,\yD);
\coordinate (Cp) at (\xCp,\yCp);
\coordinate (Dp) at (\xDp,\yDp);
\coordinate (Z) at (\xz, \yz);
\hypquad(\xA,\yA)(\xDp,\yDp)(\xCp,\yCp)(\xB,\yB){gray!40};

% draw the four edges (A->B->C->D->A)
\drawgeo{\xA}{\yA}{\xB}{\yB}
\drawgeo{\xB}{\yB}{\xC}{\yC}
\drawgeo{\xC}{\yC}{\xD}{\yD}
\drawgeo{\xD}{\yD}{\xA}{\yA}
%\drawgeo{\xA}{\yA}{\xDp}{\yDp}
%\drawgeo{\xDp}{\yDp}{\xCp}{\yCp}
%\drawgeo{\xCp}{\yCp}{\xB}{\yB}

% draw and label vertices
\foreach \pt/\name in {A/$z_3$, D/$z_{60}$, Cp/$$, Dp/$$}{
  \fill (\pt) circle (0.4pt);
  \node[below left,font=\large] at (\pt) {\name};
}

%\fill[gray!30] (A) -- (Dp) -- (Cp) -- (B) -- cycle;

\fill (C) circle (0.4pt);
\node[below right,font=\large] at (C) {$z_{15}$};

\fill[color=red] (B) circle (0.4pt);
\node[below left,font=\large, color=red] at (B) {$z_{12}$};

%\fill (Z) circle (0.4pt);
%\node[below right,font=\large, color=red] at (Z) {$z_{147}$};

\end{tikzpicture}

\caption{Fundamental domain for $X^*_{15}(1)$}
\end{figure}

$$\begin{matrix} z  & z_{3} & z_{12} & z_{15} & z_{60}   \\ \hline
|{\rm Stab}(z)| \;  & 6 & 2 & 2 & 2 \\ \hline
τ(z)  & \infty & 0 & 81 & 1 \end{matrix}$$

$$z_3=\frac{i}{\sqrt{3}} \quad z_{12}=\frac{\sqrt{3}(\sqrt{15} + i)}{12} \quad z_{15}= \frac{3+\sqrt{3}i}{3(\sqrt{5}-1)} \quad z_{60}=\frac{3\sqrt{3}+2i}{\sqrt{3}(4\sqrt{5}-7)}$$

\subsection{Modular Forms on $X^{*}_{14} (1)$} 

\medskip From the equation 1 in section 2.1 we compute the following dimension for $S_k$ the space of modular forms of weight $k$ on $X^{*}_{14} (1)$

$$   {\rm dim} S_4(X^{*}_{14} (1)) = 1 $$
$$   {\rm dim} S_8(X^{*}_{14} (1)) =  2 $$
$$   {\rm dim} S_{14}(X^{*}_{14} (1)) = 1 $$

\begin{comment}
\begin{lemma} Given two non-zero modular forms $f, g$ on a Shimura curve and point $p$ such that $f(p)=0$ and $g(p)≠0$, $f$ and $g$ are algebraically independent.  
\end{lemma}
\begin{proof} Any non-trivial polynomial $P(x,y)$ with $P(f,g)=0$ will have a non-trivial homogeneous component $P_r$ (terms with the  weight r). As all the weight components in $P(f,g)$ vanish, we have $P_r(f,g)=0$. Write $P(f,g)=cg^n+f.Q(g)$ for some polynomial $Q$. As $g^n(p)≠0$ and $f.Q(g)(p)=0$, we have $c=0, P_r(f,g)=f.Q(g)=0$ which implies $Q(g)=0$. By replacing $P$ with $Q$, we get a lower weight homogeneous component which vanishes and since infinite descent is not possible on the weight, we have algebraic independence of $f$ and $g$.   
 
\end{proof}
\end{comment}

\begin{lemma} Given a modular form $f≠0$ on a Shimura curve and a non-constant modular function $a$, the forms $f$ and $af$ are algebraically independent.
\end{lemma}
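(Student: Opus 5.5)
Suppose $P(x,y)\in\C[x,y]$ is a nonzero polynomial with $P(f,af)=0$; the plan is to derive a contradiction. Here $f$ has some weight $k$ and $a$ has weight $0$, so $f$ and $af$ both have weight $k$. The first step is to reduce to homogeneous $P$. Expand $P=\sum_r P_r$ by total degree. Then $P_r(f,af)$ transforms under the Fuchsian group with automorphy factor $(cz+d)^{kr}$. Modular forms of distinct weights are linearly independent as functions on $\HH$: if $\sum_r h_r=0$ with $h_r$ of weight $kr$, apply $\gamma$ to get $\sum_r (cz+d)^{kr}h_r(z)=0$ for every $\gamma$ in the group. Letting $(cz+d)$ range over infinitely many values at a fixed generic $z$ gives a Vandermonde argument forcing each $h_r(z)=0$. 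Hence we may assume $P$ is homogeneous of some degree $n\ge 1$.

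Next, factor out $f$. For homogeneous $P$ of degree $n$,
$$P(f,af)=f^n\,P(1,a).$$
Since $f\neq 0$ and the ring of meromorphic functions on the connected $\HH$ is an integral domain, we get $P(1,a)=0$ identically. Write $p(t)=P(1,t)$. Because $P$ is homogeneous and nonzero, $p$ is a nonzero polynomial in one variable: the coefficients of $P$ are recovered from those of $p$ by homogenization. So the nonconstant meromorphic function $a$ satisfies a nonzero polynomial equation over $\C$.

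Finally, rule this out. A nonzero polynomial $p$ has finitely many roots, so $a$ would take values in a finite set on the connected domain $\HH$ (away from its discrete set of poles). By continuity $a$ would then be constant, contradicting the hypothesis.

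The only step needing care is the reduction to homogeneous components, that is, the linear independence of forms of distinct weights. If the Vandermonde argument feels heavy, an alternative is to use the Hauptmodul: pull back to $X$ and note that a relation among forms of different weights forces each weight component to vanish, by comparing behaviour under a single hyperbolic element $\gamma$ with $|cz+d|$ taking infinitely many values along its orbit. The rest of the argument is elementary.
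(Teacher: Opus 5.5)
Your proof is correct and follows the same route as the paper: separate $P$ into weight (equivalently, degree) components to reduce to a homogeneous relation, factor $P(f,af)=f^nP(1,a)$, and conclude that the non-constant function $a$ would be a root of a nonzero polynomial. Your Vandermonde argument justifies the step the paper only asserts (``each weight component vanishes''); like the paper, it implicitly needs the weight $k$ of $f$ to be positive, since for $k=0$ the form $f$ is constant and the statement fails.
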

\begin{proof} Assume there is a non-zero polynomial $P(x,y)$ with $P(af,f)=0$. This implies each weight component vanishes. So, we can choose $P$ to be non-zero and homogeneous of the form $\sum_0^{r}c_ix^iy^{r-i}$. Then, $0=P(af,f)=f^r\sum_0^rc_ia^i$. Both $f^r$ and the polynomial in $a$ have finitely many zeroes, so this is a contradiction. 
\end{proof}

\begin{theorem} The ring of modular forms on $Γ^*$ is generated by modular forms $g_4, g_8, g_{14}$ where

   $$g_4 = \frac{(Dτ)^2}{τ(τ-1)(τ-\frac{r_2}{r_1})}$$
   $$g_8 = τg_4^2=\frac{(Dτ)^4}{τ(τ-1)^2(τ-\frac{r_2}{r_1})^2}$$
   $$g_{14} =(Dτ)g_4^3= \frac{(Dτ)^7}{τ^3(τ-1)^3(τ-\frac{r_2}{r_1})^3}$$

$g_4, g_8$ are algebraically independent and
$$g_{14}^2=g_8^3g_4-\frac{r_1+r_2}{r_1}g_8^2g_4^3+\frac{r_2}{r_1}g_8g_4^5$$. 
    
\end{theorem}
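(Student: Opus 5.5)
I will work throughout in the normalised Hauptmodul (rescaled by $r_1$), so that the three elliptic points of order $2$ sit at $τ=0,1,\frac{r_2}{r_1}$ and the order-$4$ point $z_4$ sits at $τ=\infty$. The proof has four steps: identify the proposed forms as Yang's basis elements, count dimensions to get generation, verify the relation by direct algebra, and apply the Lemma for algebraic independence.

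\textbf{Step 1: the forms are modular.} In Yang's theorem, with $e_i=2$ at the three finite elliptic points and $k=4$, the exponent is $\lfloor 4\cdot\tfrac12\rfloor=2$. The term at $τ=\infty$ is omitted. So $S_4$ is spanned by
\[
(Dτ)^2\,τ^{-2}(τ-1)^{-2}\bigl(τ-\tfrac{r_2}{r_1}\bigr)^{-2}.
\]
This does not literally match the displayed $g_4$, so I first reconcile the exponents by a local check.

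At an order-$2$ point $τ_i$ we have $τ-τ_i\sim c\,(z-z_i)^2$, hence $Dτ$ vanishes to order $1$ there. Then $g_4=(Dτ)^2/\prod(τ-a_i)$ has order $2-2=0$ at each such point. At $z_4$ we have $τ\sim (z-z_4)^{-4}$ and $Dτ\sim(z-z_4)^{-5}$, so $g_4$ has order $-10+12=2$.

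The total degree check is $\frac{k}{4\pi}\mathrm{Area}=4\cdot\tfrac12\bigl(-2+\tfrac32+\tfrac34\bigr)=\tfrac12$. In local orbifold coordinates, order $2$ at a point of stabiliser order $4$ contributes $\tfrac12$, so $g_4$ is holomorphic and nonzero, and $S_4=\C g_4$.

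The same orders-of-vanishing bookkeeping applies to the other two forms. $g_8=τg_4^2$ lies in $S_8$ because $τ g_4^2$ has order $-4+4=0$ at $z_4$. $g_{14}=Dτ\,g_4^3$ has weight $2+12=14$, and one checks it is holomorphic: $Dτ$ has order $-5$ at $z_4$ while $g_4^3$ has order $6$.

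\textbf{Step 2: generation.} I will show that every $f\in S_k$ is a polynomial in $g_4,g_8,g_{14}$. The quotient $f/g_4^{k/4}$ (or $f/(g_{14}g_4^{(k-14)/4})$ when $4\nmid k$) is a rational function of $τ$. Its pole order at $τ=\infty$ is bounded by the vanishing order of the denominator at $z_4$, which is tracked by $\lfloor\cdot\rfloor$ in the dimension formula. Writing
\[
f=g_4^{k/4}P(τ)=\sum_j c_j\,g_4^{k/4-2j}g_8^{\,j}
\]
then expresses $f$ in the generators. Any $S_k$ with $k\equiv2\pmod 4$ is reached via the factor $g_{14}$.

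The key input is that Yang's basis $τ^jg_4^{k/4}$ for $j<d_k$ has $d_k=\lfloor k/8\rfloor+1$ in the $4\mid k$ case, and this matches the number of available monomials. I will confirm this equality by computing $d_k$ from equation (1) for each residue of $k$ mod $8$.

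\textbf{Step 3: the relation.} Since $g_{14}=Dτ\,g_4^3$ and $(Dτ)^2=g_4\,τ(τ-1)(τ-\rho)$ with $\rho=\frac{r_2}{r_1}$,
\[
g_{14}^2=g_4^7\,τ(τ-1)(τ-\rho).
\]
Expanding, and using $τ=g_8/g_4^2$, gives
\[
g_{14}^2=g_8^3g_4-(1+\rho)g_8^2g_4^3+\rho\, g_8g_4^5,
\]
which is the claimed identity because $1+\rho=\frac{r_1+r_2}{r_1}$.

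\textbf{Step 4: algebraic independence.} This follows from the preceding Lemma with $f=g_4^2$ and the non-constant modular function $a=τ$. Note that $g_4$ and $g_4^2$ are algebraically dependent on each other, so independence of $(g_4,g_8)$ follows from that of $(g_4^2,\,τg_4^2)$.

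The main obstacle is Step 2: making the dimension count agree with monomial counts for every weight, including those $\equiv 2 \pmod 4$. This is where the floor functions at the order-$4$ point must be handled carefully.
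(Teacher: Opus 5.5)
Your proposal is correct and follows essentially the paper's route: Yang's basis and dimension formula, the identity $g_{14}^2=g_4^7\,\tau(\tau-1)(\tau-\tfrac{r_2}{r_1})$ rewritten via $\tau=g_8/g_4^2$, and the Lemma with $a=\tau$. It is in fact a little more careful than the paper, because applying the Lemma to $f=g_4^2$ and then passing from $g_4^2$ to $g_4$ is what is really needed, whereas the paper's citation literally only separates $g_4$ from $\tau g_4$.

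The exponent mismatch you hit in Step~1 is a typo in the paper's statement of Yang's theorem: the exponent should be $\lfloor \tfrac{k}{2}(1-\tfrac{1}{e_i})\rfloor$. Once this is corrected, Yang's basis is $\tau^j g_4^{k/4}$ for $4\mid k$, and $\tau^j\,D\tau\,g_4^{(k-2)/4}=g_{14}\,g_8^{\,j}\,g_4^{(k-14)/4-2j}$ for $k\equiv 2 \pmod 4$. Your Step~2, which spells out the generation argument the paper leaves implicit, then reduces to checking that the $g_4$-exponent stays nonnegative for $j\le d_k-1$, and this holds in every residue class of $k$ mod $8$.
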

\begin{proof}
The expressions for the modular forms follows from \cite{yy}. $g_4$ and $g_8$ are algebraically independent by Lemma 1 with $a=τ$. Since ${\rm dim} (S_{28}(X^{*}_{14} (1)))=4$, and $g_8^3g_4,g_8^2g_4^3, g_8g_4^5, g_4^7$ are linearly independent, we have the following relation for some $a,b,c,d∊ℂ$.
$$g_{14}^2=ag_8^3g_4+bg_8^2g_4^3+cg_8g_4^5+dg_4^7$$
$$g_4^6.Dτ^2=g_4^7(aτ^3+bτ^2+cτ+d)$$
$$Dτ^2=\frac{Dτ^2}{τ(τ-1)(τ-r_2/r_1)}(aτ^3+bτ^2+cτ+d)$$
$$a=1, b=-(r_1+r_2)/r_1, c=r_2/r_1, d=0$$

Counting dimensions shows that there are no further relationships. 

\end{proof}

We note that $g_4(z_8)≠0$ as $τ$ has a zero of order $2$ at $z_8$ and $Dτ^2$ in the numerator of $g_4$ is also a root of order $2$. 

\subsection{Modular Forms on $X^{*}_{15} (1)$} As in the D=14, we can compute the dimensions of the space of weight $k$ modular forms.

$$   {\rm dim} S_4(X^{*}_{15} (1)) = 1 $$
$$   {\rm dim} S_8(X^{*}_{15} (1)) =  2 $$
$$   {\rm dim} S_{10}(X^{*}_{15} (1)) = 1 $$
$$   {\rm dim} S_{12}(X^{*}_{15} (1)) = 3 $$

\begin{theorem} The ring of modular forms on $Γ^*$ is generated by modular forms $h_4, h_8, h_{10}, h_{12}$ where

    $$ h_4= \frac{(Dτ)^2}{τ(τ-1)(τ-81)}$$

    $$h_8=τ.h_4^2=\frac{(Dτ)^4}{τ(τ-1)^2(τ-81)^2}$$

    $$ h_{10}=Dτ.h_4^2=\frac{(Dτ)^5}{τ^2(τ-1)^2(τ-81)^2} $$

    $$ h_{12}=τ^2.h_4^3=\frac{(Dτ)^6}{τ(τ-1)^3(τ-81)^3} $$
    
$h_4$ and $h_8$ are algebraically independent. We have the relations $$h_{10}^2=h_{12}h_8-82h_8^2h_4+81h_8h_4^3 \quad \text{and} \quad  h_4h_{12}=h_8^2$$. 

\end{theorem}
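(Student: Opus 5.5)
We will follow the same pattern as the proof of the preceding theorem for $X^*_{14}(1)$. First, the explicit shapes of $h_4,h_8,h_{10},h_{12}$ come from Yang's theorem (Theorem 2.2) applied to the signature $(0;2,2,2,6)$. The elliptic points of order $2$ sit at $\tau=0,1,81$ and the order-$6$ point sits at $\tau=\infty$. For weight $k$ the exponent $\lfloor k/2\rfloor$ appears at each finite order-$2$ point, so the basis of $S_k$ is $\tau^j (D\tau)^{k/2}\,[\tau(\tau-1)(\tau-81)]^{-\lfloor k/2\rfloor}$ with $0\le j<d_k$.

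We will check the dimension counts $d_4=1$, $d_8=2$, $d_{10}=1$, $d_{12}=3$ directly from the dimension formula. We then read off the generators:
\begin{itemize}
\item $h_4$ spans $S_4$;
\item $\{h_4^2,\tau h_4^2\}$ spans $S_8$;
\item $D\tau\, h_4^2=(D\tau)^5/(\tau(\tau-1)(\tau-81))^2$ spans $S_{10}$, since $\lfloor 5\rfloor=5$ would give $\tau^{-5}$ in Yang's basis, and we must match the normalization;
\item $S_{12}$ is spanned by $h_4^3,\tau h_4^3,\tau^2 h_4^3$.
\end{itemize}
Here we identify each displayed expression with the corresponding Yang basis element, possibly after multiplying by a power of $\tau$ or of $D\tau/\tau$ to match the stated formulas.

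Algebraic independence of $h_4,h_8$ follows immediately from Lemma 3.1 with $f=h_4^2$ and $a=\tau$. Strictly, $h_8=\tau h_4^2$, so we use that $h_4^2$ and $\tau h_4^2$ are independent, and hence so are $h_4$ and $h_8$.

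The relation $h_4h_{12}=h_8^2$ is immediate: both sides equal $\tau^2h_4^4$. For the other relation we will argue as in Theorem 3.2. Since $h_{10}^2=(D\tau)^2h_4^4$ and $(D\tau)^2=h_4\,\tau(\tau-1)(\tau-81)$, we get
\[
h_{10}^2=\tau(\tau-1)(\tau-81)\,h_4^5=(\tau^3-82\tau^2+81\tau)h_4^5 .
\]
Rewriting $\tau^3h_4^5=h_{12}h_8$, $\tau^2h_4^5=h_8^2h_4$ and $\tau h_4^5=h_8h_4^3$ gives exactly the stated identity. Equivalently, one could expand $h_{10}^2$ in a basis of $S_{20}$ and solve for the coefficients.

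The main obstacle is proving that these four forms generate the whole graded ring, not merely that the relations hold. The plan is to show that every element of $S_k$, namely $\tau^jh_4^{k/4}$ or $\tau^j D\tau\, h_4^{(k-2)/4}$ type monomials with $j<d_k$, is a monomial in $h_4,h_8,h_{10},h_{12}$. We will induct on $k$, using that $d_k$ grows by roughly one each time $k$ increases by $4$ (so $d_{k+4}=d_k+1$ up to boundary effects coming from the order-$6$ point). Multiplication by $h_4$ and $h_8$ then supplies the powers $\tau^j$, while $h_{10}$ handles the odd case $k\equiv 2\pmod 4$, which requires a factor $D\tau$. The case $k\equiv 0 \pmod 4$ with $j$ near $d_k-1$ is where $h_{12}$ is needed, because $\tau^2h_4^3$ cannot be obtained from $h_4$ and $h_8$ alone in weight $12$. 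We will compare the Hilbert series $\sum d_k x^k$ with that of the quotient ring by the two relations. That comparison is the step we expect to require the most care.
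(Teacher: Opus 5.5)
\textbf{Verdict: the proposal has a genuine gap.} Your algebraic-independence argument and both relations are correct and follow the paper. Deriving $h_{10}^2=(D\tau)^2h_4^4=\tau(\tau-1)(\tau-81)h_4^5$ by direct substitution is cleaner than the paper's ansatz in $S_{20}$, and it does not need $\dim S_{20}=4$.

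The error is in how you apply Yang's theorem. The exponent at a finite elliptic point of order $e_i$ is $\lfloor\frac{k}{2}(1-\frac{1}{e_i})\rfloor$. At $\tau=0,1,81$ this is $\lfloor k/4\rfloor$, not $\lfloor k/2\rfloor$. (The basis formula displayed in Section~2 drops the factor $\frac12$ that appears in the dimension formula beside it.) With $\lfloor k/2\rfloor$ your basis elements are not holomorphic. Already for $k=4$, $(D\tau)^2/(\tau(\tau-1)(\tau-81))^2$ has a double pole at $z_{12}$, where $\tau$ vanishes to order $2$ and $D\tau$ to order $1$. You noticed the resulting mismatch at $k=10$ and proposed to absorb it by multiplying by powers of $\tau$ or of $D\tau/\tau$. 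That is not a normalization: the first changes the divisor and the second changes the weight. With the correct exponent no matching is needed. $S_{4m}$ has basis $\tau^jh_4^m$ and $S_{4m+2}$ has basis $\tau^j\,D\tau\,h_4^m$, so $h_4,h_8,h_{10},h_{12}$ are literally Yang basis elements.

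For generation, which the paper only asserts (``by looking at newforms''), your plan has the right ingredients but omits the fact that makes it work: the exact range of $j$. This range is decided at the order-$6$ point $z_3$, where $\tau=\infty$.
\begin{itemize}
\item In a local parameter at $z_3$, $\tau$ has a pole of order $6$ and $D\tau$ a pole of order $7$.
\item So $\tau^jh_4^m$ has order $4m-6j$, and $\tau^jD\tau\,h_4^m$ has order $4m-7-6j$.
\item Holomorphy therefore means $3j\le 2m$, respectively $3j\le 2(m-2)$.
\item Hence $d_{4m}=\lfloor 2m/3\rfloor+1$, which grows by $2$ every three steps rather than by $1$ each step, and $d_{4m+2}=d_{4m-8}$.
\end{itemize}
The lattice points of the cone $0\le 3j\le 2m$ are generated by $(0,1),(1,2),(2,3)$, i.e.\ by $h_4,h_8,h_{12}$. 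Explicitly, write $j=2a+b$ with $b\in\{0,1\}$; then $(j,m)=a(2,3)+b(1,2)+(m-3a-2b)(0,1)$. Also $\tau^jD\tau\,h_4^m=h_{10}\cdot\tau^jh_4^{m-2}$. Together these prove generation without any induction.

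Finally, a Hilbert-series comparison with the quotient by the two relations cannot by itself give generation, because it only bounds the generated subring from above. Its proper role, once spanning is known, is to show that the two relations form a complete set.
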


\begin{proof}
  By looking at newforms, we can get the generators of the ring of modular forms as above. The expressions for the forms comes from \cite{yy}. $h_4h_{12}=h_8^2$ follows immediately. $h_8=τh_4^2$. So, $h_8, h_4^2$ are algebraically independent by Lemma 3.1. By Lemma 4.1, $h_4, h_8$ are also algebraically independent. $h_8^2h_4, h_8h_4^3, h_4^5$ are linearly independent. $h_{12}h_4^2=h_8^2h_4$. Since ${\rm dim} S_{20}(X^{*}_{15} (1))=4$, the only other newform $h_{12}h_8$ must be outside the span of the 3 forms above. So, we have the following relation for some $a,b,c,d∊ℂ$. 
 $$h_{10}^2=ah_{12}h_8+bh_8^2h_4+ch_8h_4^3+dh_4^5$$
 $$(Dτ)^2h_4^4=h_4^5(aτ^3+bτ^2+cτ+d)$$
 $$Dτ^2=h_4(aτ^3+bτ^2+cτ+d)$$
 $$a=1, b=-82, c=81, d=0$$
 
\end{proof}

We note that $h_4(z_{12})≠0$ as $τ$ in the denomimator of $h_4$ has a zero of order 2 at $z_{12}$ and $(Dτ)^2$ in the numerator also has a zero of order $2$. So, we can choose a branch $h_4^{1/4}$ in $M(z_{12})$.

\begin{lemma} Given an extension of  integral domains $R ⊂ S$, $a,b∊S$ and $a,b^2$ algebraically independent over $R$, $a,b$ are also algebraically independent over $R$.
\end{lemma}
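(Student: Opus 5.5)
We argue by contraposition. Suppose $a,b$ are algebraically dependent over $R$. Then there is a nonzero polynomial $P(x,y)\in R[x,y]$ with $P(a,b)=0$. From $P$ we will build a nonzero polynomial $Q(x,y)\in R[x,y]$ with $Q(a,b^2)=0$, contradicting the hypothesis that $a,b^2$ are algebraically independent over $R$.

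\textbf{Step 1: separate even and odd powers of $y$.} Write
$$P(x,y)=P_0(x,y^2)+y\,P_1(x,y^2)$$
with $P_0,P_1\in R[x,y]$. This decomposition is unique, and $P\neq 0$ forces $(P_0,P_1)\neq(0,0)$.

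\textbf{Step 2: eliminate $b$ via the conjugate.} Set $\bar P(x,y)=P(x,-y)=P_0(x,y^2)-yP_1(x,y^2)$ and form the norm
$$N(x,y)=P(x,y)\bar P(x,y)=P_0(x,y^2)^2-y^2P_1(x,y^2)^2 .$$
This polynomial involves only even powers of $y$, so $N(x,y)=Q(x,y^2)$ where $Q(x,u)=P_0(x,u)^2-u\,P_1(x,u)^2\in R[x,u]$. Evaluating at $x=a,\ y=b$ gives
$$Q(a,b^2)=P(a,b)\,P(a,-b)=0 .$$

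\textbf{Step 3: show $Q\neq 0$.} This is the only point needing care. We argue in $R[x,u]$, which is an integral domain because $R$ is.
\begin{itemize}
\item If $P_1=0$, then $P_0\neq 0$, so $Q=P_0^2\neq 0$.
\item If $P_0=0$, then $P_1\neq 0$, so $Q=-uP_1^2\neq 0$.
\item If both $P_0,P_1$ are nonzero, compare degrees in $u$. The term $P_0^2$ has even $u$-degree $2\deg_u P_0$. The term $uP_1^2$ has odd $u$-degree $1+2\deg_u P_1$. These degrees differ, so the leading terms cannot cancel and $Q\neq 0$.
\end{itemize}
The degree computations are valid because leading coefficients in $R[x]$ multiply to nonzero elements, again since $R$ is a domain.

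Alternatively, one can avoid Step 3 by taking $P$ of minimal total degree among nonzero relations. If $P_1(a,b^2)=0$, then $P_0(a,b^2)=0$ as well. Since $(P_0,P_1)\neq(0,0)$, at least one of them is a nonzero polynomial vanishing at $(a,b^2)$, which directly contradicts the independence of $a,b^2$.

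Either way we obtain a nonzero $Q$ with $Q(a,b^2)=0$. This contradicts the assumed algebraic independence of $a$ and $b^2$ over $R$, so $a$ and $b$ are algebraically independent over $R$.

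In the intended application this gives the independence of $h_4$ and $h_8$ over $\C$, taking $a=h_8$ and $b=h_4$.
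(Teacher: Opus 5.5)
Your Steps 1--3 are correct and essentially reproduce the paper's proof. The paper also splits the relation into even and odd parts in $y$, squares to get $P_0(a,b^2)^2=b^2P_1(a,b^2)^2$, and uses independence of $a,b^2$ to obtain the polynomial identity $P_0(x,u)^2=uP_1(x,u)^2$. It then rules this out by parity of exponents, factoring $(P_0(x,y^2)-yP_1(x,y^2))(P_0(x,y^2)+yP_1(x,y^2))=0$ in the domain $R[x,y]$ where you compare $u$-degrees in $R[x][u]$. Drop the ``Alternatively'' paragraph, though: it only handles the case $P_1(a,b^2)=0$ and never uses minimality, and when $P_1(a,b^2)\neq 0$ you again need $P_0^2-uP_1^2\neq 0$, which is exactly Step 3.
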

\begin{proof}
  We can write any polynomial relation between $a,b$ as $P(a,b^2)=b*Q(a,b^2)$ by bringing $b^{2k}$ terms to the left and $b^{2k+1}$ terms to the right. Apriori, $P(x,y^2)$ need not be the same as $yQ(x,y^2)$, but after squaring and using algebraic independence of $a, b^2$ we get $P(x,y^2)^2=y^2Q(x,y^2)^2$ which implies $P(x,y^2)=\pm yQ(x,y^2)$ inside $R[x,y]$  as $R[x,y]$ is also an integral domain. But, this is not possible unless $P=Q=0$ as the LHS contains only even powers of $y$ and the RHS only odd powers, which forces the polynomial relation to be trivial.
\end{proof} 

\subsection{Modular Solutions of the Heun Equation}

 For a vertex $z$ in the quadrilateral tessellation of $\HH$, let $M(z)$ denote the open neighborhood of $z$, which consists of the interior of the union of those quadrilaterals, which have $z$ as a vertex.
 
\medskip{} We show that $g_4^{1/4}$ in $M(z_8)$ satisfies a Heun equation whose singularities correspond to the elliptic points of  $X^*_{14}(1)$. Similarly $h_4^{1/4}$ satisfies the Heun equation for $X^*_{15}(1)$

\begin{lemma} 
The form $g_4^{\frac{1}{4}}$ satisfies the Heun equation $$\frac{d^2w}{dτ^2} + \left(\frac{1/2}{τ} + \frac{1/2}{τ -1} + \frac{1/2}{τ - a} \right) \frac{dw}{dτ} + \frac{(\frac{1}{8})(\frac{3}{8}) τ - q}{τ(τ - 1)(τ - a)} w = 0 $$. 
\end{lemma}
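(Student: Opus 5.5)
The plan is to derive the equation directly from $g_4^{1/4}$ and $z g_4^{1/4}$. The standard fact behind the argument is the following. Suppose $y_1,y_2$ are solutions of a second-order equation
$$\frac{d^2y}{dτ^2}+P(τ)\frac{dy}{dτ}+Q(τ)y=0$$
whose ratio $y_2/y_1$ is the inverse $z(τ)$ of the Hauptmodul. Then the Wronskian is
$$W=y_1\frac{dy_2}{dτ}-y_2\frac{dy_1}{dτ}=y_1^2\frac{dz}{dτ},$$
so that $y_1^2=W\cdot τ'$, where $τ'=Dτ=dτ/dz$. Also $W=C\exp\!\left(-\int P\right)$. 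So I would show that $g_4^{1/4}$ has exactly this shape for the Heun equation in the statement.

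\textbf{Step 1: the Wronskian.} With $P=\frac{1/2}{τ}+\frac{1/2}{τ-1}+\frac{1/2}{τ-a}$ we get $W=C\,(τ(τ-1)(τ-a))^{-1/2}$. Hence $W τ'=C\,(Dτ)/(τ(τ-1)(τ-a))^{1/2}$, whose square is $C^2 g_4$ by Theorem 3.2. So it suffices to show two things: the equation in the statement admits a basis $y_1,y_2$ with $y_2/y_1=z$, and that basis has the Wronskian normalised as above. Then $y_1=\pm C^{1/2}g_4^{1/4}$ on $M(z_8)$. The branch of $g_4^{1/4}$ is well defined there because $g_4(z_8)\neq 0$, as noted after Theorem 3.2.

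\textbf{Step 2: the ratio of solutions is $z$.} A ratio of solutions is determined up to a Möbius map by its Schwarzian, namely $\{η,τ\}=2Q-\tfrac12P^2-P'$. I would first check the local exponents of the Heun equation. These are $0,\tfrac12$ at $0,1,a$, and $\tfrac18,\tfrac38$ at $\infty$. The exponent differences are therefore $\tfrac12,\tfrac12,\tfrac12,\tfrac14$, matching the signature $(0;2,2,2,4)$ and the values of $τ$ at $z_8,z_{56},z_{56b},z_4$, after rescaling $τ\mapsto τ/r_1$.

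This fixes every term of $\{z,τ\}$ except the single accessory parameter $q$. For that I would invoke the identification in \cite{ne}, equation \eqref{X14eqn}, that this equation, with $q=-3/(256 r_1)$, is the uniformizing (Picard–Fuchs) equation for $X^*_{14}(1)$. Equivalently, the ratio of two of its solutions inverts $τ$.

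\textbf{Step 3: choosing the basis.} Choose solutions $y_1,y_2$ with $y_2/y_1=z$; this only replaces the basis by a linear change. Then Step 1 gives $y_1^2=W τ'$ with $W$ as computed, so $y_1$ is a constant multiple of $g_4^{1/4}$. Since the equation is linear, $g_4^{1/4}$ is itself a solution.

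The main obstacle is the accessory parameter in Step 2. The local data cannot detect $q$, so one must either trust \cite{ne} or verify it independently. To verify it independently, I would first expand $τ$ near the elliptic point $z_8$ using $τ\sim c(z-z_8)^2$, or else use the Hecke/Yang expansions. I would then compare the next coefficient of $\{z,τ\}$ at $τ=0$ with $2Q-\tfrac12P^2-P'$, which determines $q$ uniquely.
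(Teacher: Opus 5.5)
Your proposal is correct and is essentially the paper's argument run in reverse. The Abel--Wronskian identity $y_1^2=W\,D\tau$ with $W\propto(\tau(\tau-1)(\tau-a))^{-1/2}$ is exactly the paper's check that $A(g)=\frac{D^2\tau}{(D\tau)^2}-\frac{2Dg}{g\,D\tau}$ equals the Heun coefficient for $g=g_4^{1/4}$. Your Step 2 (matching Schwarzians, with the accessory parameter $q$ taken from \cite{ne}) is what the paper means when it says, via \cite{bg}, that the zeroth-order coefficient matches ``automatically''.

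The only caveat concerns your optional independent check. The local expansion $\tau\sim c(z-z_8)^2$ cannot by itself pin down $q$, because the next Taylor coefficient of $\tau$ at $z_8$ is exactly the unknown quantity. Some global input, such as Elkies' computation or Yang's Hecke-operator method, is genuinely required.
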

\begin{proof}
    A modular form $g$ of weight 1 is a solution the differential equation $$\frac{d^2g}{dτ^2}+A(g)\frac{dg}{dτ}+B(g)τ=0$$ where $A(g)=\frac{D^2τ}{Dτ^2}-\frac{2Dg}{gdτ}$ with ${Df=df/dz}$ and $B(g)=2(\frac{Dg}{gdτ})^2-\frac{D^2g}{g(Dτ^2)}$.

   \medskip  Setting $g=(g_4)^{1/4}$ and taking the logarithmic derivatives, we get $$\frac{Dg}{g}=\frac{1}{4}D(ln (g_4))=\frac{1}{4}D(2ln(Dτ)-ln(τ)-ln(τ-1)-ln(τ-\frac{r_2}{r_1}))$$
    $$\frac{2dg}{gdτ}=\frac{D^2τ}{(Dτ)^2}-\frac{1}{2}\left[\frac{1}{τ}+\frac{1}{τ-1}+\frac{1}{(τ-\frac{r_2}{r_1})}\right]$$
   which implies $$A(g)=\frac{1}{2}\left[\frac{1}{τ}+\frac{1}{τ-1}+\frac{1}{(τ-\frac{r_2}{r_1})}\right]$$ is the coefficient of the Heun equation. That the coefficient $B(τ)$ also matches is an automatic consequence as explained in proof of Proposition 4 in \cite{bg} and can also be checked by a similar computation as above.
    
\end{proof}

\begin{lemma} 
The form $(h_4)^{\frac{1}{4}}$ satisfies the Heun equation $$\frac{d^2w}{dτ^2} + \left(\frac{1/2}{τ} + \frac{1/2}{τ -1} + \frac{1/2}{τ - a} \right) \frac{dw}{dτ} + \frac{(\frac{1}{8})(\frac{3}{8}) τ - q}{τ(τ - 1)(τ - a)} w = 0 $$. 
\end{lemma}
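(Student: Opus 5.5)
The plan is to run the same argument as in the preceding lemma for $g_4^{1/4}$, now with $h_4=\frac{(D\tau)^2}{\tau(\tau-1)(\tau-81)}$ in place of $g_4$, and with $a=81$. Throughout, $D$ denotes $d/dz$ on $\HH$. We work in the neighbourhood $M(z_{12})$, where $h_4(z_{12})\neq 0$, as noted after the theorem on generators for $X^*_{15}(1)$, so a holomorphic branch $g=h_4^{1/4}$ exists. Then $g$ is a (multivalued) weight-one form, and viewed as a function of the Hauptmodul $\tau$ it satisfies a second-order linear equation
$$\frac{d^2g}{d\tau^2}+A(g)\frac{dg}{d\tau}+B(g)\,g=0,$$
where
$$A(g)=\frac{D^2\tau}{(D\tau)^2}-\frac{2Dg}{g\,D\tau},\qquad B(g)=2\Big(\frac{Dg}{g\,D\tau}\Big)^2-\frac{D^2g}{g\,(D\tau)^2}.$$
This is the identity quoted in the previous proof.

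\textbf{Step 1 (first-order coefficient).} Take logarithmic derivatives:
$$\frac{Dg}{g}=\frac14\Big(2\frac{D^2\tau}{D\tau}-D\tau\Big[\frac1\tau+\frac1{\tau-1}+\frac1{\tau-81}\Big]\Big).$$
Hence
$$\frac{2Dg}{g\,D\tau}=\frac{D^2\tau}{(D\tau)^2}-\frac12\Big[\frac1\tau+\frac1{\tau-1}+\frac1{\tau-81}\Big].$$
The $D^2\tau$ terms cancel in $A(g)$, leaving
$$A(g)=\frac{1/2}{\tau}+\frac{1/2}{\tau-1}+\frac{1/2}{\tau-81}.$$
This is exactly the first-order coefficient in the statement, with $\gamma=\delta=\epsilon=\tfrac12$ and $a=81$.

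\textbf{Step 2 (zeroth-order coefficient).} Substitute the same logarithmic derivative into $B(g)$. Then $D^2g/g=(Dg/g)^2+D(Dg/g)$, so $B(g)$ becomes a combination of $\big(D^2\tau/(D\tau)^2\big)^2$, $D^3\tau/(D\tau)^3$ and rational functions of $\tau$. The terms involving derivatives of $\tau$ assemble into the Schwarzian $\{\tau,z\}/(D\tau)^2$, which is a rational function $Q(\tau)$ determined by the signature $(0;2,2,2,6)$ and the points $0,1,81,\infty$. Following the argument cited from Proposition 4 of \cite{bg}, I will then show that $B(g)$ is a rational function in $\tau$ with at most simple poles at $0,1,81$ and decay $O(\tau^{-2})$ at $\infty$. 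Such a function has the form $\frac{c\,\tau-q}{\tau(\tau-1)(\tau-81)}$. The constant $c$ is the product of the exponents at $\infty$, and $q$ is the accessory parameter.

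\textbf{Step 3 (matching constants).} The main obstacle is fixing $c$ and $q$ against the displayed equation. For $c$, I will expand $g$ near $z_3$, where $\tau=\infty$. The local coordinate there is $\tau^{-1}\sim (z-z_3)^6$ up to a unit factor, so the exponents of $g$ at $\infty$ come from the weight-one behaviour at an elliptic point of order $6$. Their sum is fixed by the Fuchs relation $\alpha+\beta=\gamma+\delta+\epsilon-1=\tfrac12$, so $c=\alpha\beta$ follows once the exponent difference is read off. I will compare this product with the displayed constant $(\tfrac18)(\tfrac38)$. For $q$, I will evaluate the $\tau^1$ coefficient of the power-series solution at $\tau=0$, using the normalisation $g(z_{12})\neq0$. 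Alternatively, I will match $Q(\tau)$ with the accessory parameter $q=\tfrac12$ of equation [75] in \cite{ne}. The identification of the exponent difference at $\infty$ is the delicate point, and I would check it carefully against the local behaviour of $h_4$ at $z_3$. I note in advance that an order-$6$ point naively suggests a difference of $\tfrac16$, which would give $\alpha\beta=\tfrac1{18}$ rather than the displayed $\tfrac{3}{64}$.
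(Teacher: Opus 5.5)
Your Step 1 is exactly the paper's proof. The paper says only ``same as $D=14$ with $r_2/r_1$ replaced by $81$'', checks the first-order coefficient by this logarithmic-derivative computation, and delegates the zeroth-order coefficient to Proposition 4 of \cite{bg}.

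The mismatch you flag at the end of Step 3 does not need to be ``checked carefully''. It is real, and it makes the displayed lemma false. Near $z_3$, with $w=(z-z_3)/(z-\bar z_3)$, one has $\tau\sim c\,w^{-6}$ and $D\tau\sim w^{-7}$. Hence $h_4\sim w^{4}$, which is the forced order $4\equiv -2 \pmod 6$ for weight $4$ at an order-$6$ point. The solutions $(z-\bar z_3)h_4^{1/4}\sim w$ and $(z-z_3)h_4^{1/4}\sim w^{2}$ therefore behave like $\tau^{-1/6}$ and $\tau^{-1/3}$. So $\{\alpha,\beta\}=\{\tfrac13,\tfrac16\}$, with sum $\tfrac12$ as Fuchs' relation requires, and the constant is $\alpha\beta=\tfrac1{18}$, not $\tfrac{3}{64}$.

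The factor $(\tfrac18)(\tfrac38)$ was copied from the $D=14$ lemma, where the order-$4$ point gives $g_4\sim w^2$ and exponents $\tfrac18,\tfrac38$. The paper's own Section 2.4 and Main Result use $\alpha=\tfrac13$, $\beta=\tfrac16$. The first-order coefficient does not see the order of the point at $\infty$, so the paper's check cannot catch the slip. A nonzero $y$ satisfies at most one equation $y''+Ay'+By=0$ with a given $A$, so the displayed equation fails for every $q$. Commit to this as a conclusion: your argument proves the equation with $\alpha\beta=\tfrac1{18}$, $a=81$, $q=\tfrac12$, not the displayed one.

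There is a second gap, in how you fix $q$. If $q$ is read as an unspecified constant, your Step 2 already suffices for it, but the intended value is $q=\tfrac12$. The Schwarzian $\{z,\tau\}$ of a four-point orbifold is not determined by the signature $(0;2,2,2,6)$ and the points $0,1,81,\infty$. It carries one accessory parameter, so your $Q(\tau)$ is not known a priori. Reading $q$ off the $\tau^1$-coefficient of the series at $\tau=0$ is circular, because you have no $\tau$-expansion of $h_4^{1/4}$ that does not already presuppose $Q$.

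What closes the argument is your alternative, which is also what the appeal to \cite{bg} amounts to:
\begin{itemize}
\item The equation of \cite{ne} is the uniformizing equation, so a ratio of two of its solutions is a M\"obius image of $z$.
\item The ratio of $z\,h_4^{1/4}$ and $h_4^{1/4}$ is $z$ itself.
\item Hence both equations have the same invariant $2B-\tfrac12A^{2}-\tfrac{dA}{d\tau}=\{z,\tau\}$.
\item The $A$'s agree by Step 1, so the $B$'s agree.
\end{itemize}
This yields $\alpha\beta$ and $q$ simultaneously. The exponent count at $z_3$ is then only a consistency check, and it is exactly that check which exposes the misprint.
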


The proof is the same as for $D=14$ case with $\frac{r_2}{r_1}$ replaced by $81$.

\medskip As a corollary of the above, we obtain the following.
 \begin{proposition}
  \item   In $M(z_8)$, we have
$$g_4(z) = \frac{(z_8 - \bar{z}_{8})^4 g_4(z_8)}{(z - \bar{z}_{8})^4}{\rm He}\left( \frac{r_2}{r_1}, \frac{-3}{(256.r_1)}, \frac{1}{8}, \frac{3}{8}, \frac{1}{2}, \frac{1}{2}; z \right)^4$$

\item  In $M(z_{12})$, we have

     $$h_4(z) = \frac{(z_{12} - \bar{z}_{12})^4 h_4(z_{12})}{(z - \bar{z}_{12})^4}{\rm He} \left( 81, \frac{1}{2}; \frac{1}{3}, \frac{1}{6}, \frac{1}{2}, \frac{1}{2}; τ \right)^4$$
 \end{proposition}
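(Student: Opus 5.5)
The plan is to compare two solutions of the same Heun equation near the base point. In $M(z_8)$ the preceding Lemma shows that $g_4^{1/4}$ solves the Heun equation in the variable $τ$. Since $τ(z_8)=0$ and $γ=\frac12$, the local exponents at $τ=0$ are $0$ and $1-γ=\frac12$. A weight-one form is not itself $Γ^*$-invariant, so I will not argue via a single-valued function of $τ$. Instead I will use the standard normalisation from Proposition 4 of \cite{bg}.

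First, set $w=\bar z_8$ and consider the automorphy-corrected function
$$\phi(z)=\frac{(z-\bar z_8)\,g_4(z)^{1/4}}{(z_8-\bar z_8)\,g_4(z_8)^{1/4}} .$$
The elliptic element fixing $z_8$ has order $2$ and acts on the disc coordinate $s=(z-z_8)/(z-\bar z_8)$ by $s\mapsto -s$. The factor $(z-\bar z_8)$ exactly cancels the automorphy factor $(cz+d)$ of that element, up to the constant $-1$ coming from $c z_8+d=\pm i$, times a root of unity. Hence $\phi$ is an even (or at least invariant up to a fixed root of unity) holomorphic function of $s$ near $0$. Because $τ$ vanishes to order exactly $2$ at $z_8$, $\phi$ is then a holomorphic function of $s^2$, and therefore of $τ$, on the image of $M(z_8)$, with $\phi(z_8)=1$. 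I expect this to be the main obstacle: one has to check that the root-of-unity ambiguity disappears, which can be done by choosing the branch of $g_4^{1/4}$ in $M(z_8)$. This branch exists because $g_4(z_8)\neq0$, as noted after the Theorem on generators.

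Second, I will show that multiplying a solution by $(z-\bar z_8)$ still gives a solution of the Heun equation as a function of $τ$. For a Fuchsian ODE whose solutions are weight-one forms, the solution space is spanned by $g$ and $zg$, so every function $(\alpha z+\beta)g_4^{1/4}$ is again a solution. In particular $(z-\bar z_8)g_4^{1/4}$ solves the equation. Since $\phi$ is holomorphic in $τ$ at $τ=0$, it is the exponent-$0$ solution. The exponent-$\frac12$ solution involves $τ^{1/2}\sim s$, which is odd in $s$.

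Third, the exponent-$0$ solution with constant term $1$ is unique and equals ${\rm He}$ by definition, so $\phi={\rm He}(\cdots;τ)$. Raising to the fourth power gives the first formula. One caveat: the argument is ${\rm He}(\ldots;τ)$, where $τ$ is rescaled by $r_1$ according to the scaling convention for $F$; the displayed ``$z$'' should be read as this Hauptmodul variable.

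The $D=15$ case is identical. Replace $g_4,z_8$ by $h_4,z_{12}$, use the Heun Lemma for $h_4^{1/4}$ and the fact that $h_4(z_{12})\ne0$, and use $τ(z_{12})=0$ with $τ$ of order $2$ there.
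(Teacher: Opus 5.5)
Your proposal is correct and follows essentially the same route as the paper, which simply defers to Proposition 5 of Baba--Granath: recognise $(z-\bar z_8)\,g_4^{1/4}$, a combination of the two solutions $g_4^{1/4}$ and $z\,g_4^{1/4}$, as a holomorphic function of $\tau$ at $\tau=0$, and hence as the normalised exponent-$0$ solution ${\rm He}$. One correction concerns the step you call the main obstacle. Write $g_4^{1/4}(\sigma z)=\chi\,(cz+d)\,g_4^{1/4}(z)$ with $\chi^4=1$; then the multiplier in $\phi(\sigma z)=\epsilon\,\phi(z)$ is $\epsilon=\chi\,(cz_8+d)$, where $cz_8+d=\pm i$ (not $-1$). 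Changing the branch of $g_4^{1/4}$ rescales both sides equally, so it cannot alter $\epsilon$. Instead, $\epsilon=1$ is automatic: evaluating at the fixed point gives $\phi(z_8)=\epsilon\,\phi(z_8)$ with $\phi(z_8)=1$.
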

 
 The proof is same as Proposition 5 in \cite{bg} with our  base points and modular forms.

\section{The Exceptional Set}

In this section, we determine the rational locus of the exceptional set, we will see that although the exceptional set is an infinite set of CM points on $X_{15}^{*}(1)$ (those whose fundamental discriminant is $3$), there are only $2$ non-trivial rational points.

\subsection{Shimura's Results on CM periods}

Shimura’s theorem implies that CM points give rise to algebraic values of the associated automorphic functions. To compute these values explicitly, we express the Heun function in terms of generators of the ring of quaternionic modular forms on Shimura curves under consideration. The theoretical characterization of the exceptional set using Shimura's results on CM periods is just as in \cite{bg}. We give a brief explanation here.

\medskip  We can choose an equation $y^2=f_z(x)$ for the genus 2 family of curves $C_z$ where the Jacobian family $A_z=Pic^0(C_z)$ is the family of abelian surfaces with QM of a given discriminant (for us D=14, 15). Further, we can do this such that $f_z$ lives in $K(τ)$ ie. $K$ rational functions of the Hauptmodul $τ(z)$ where $K$ is a finite extension of $ℚ$. At CM points $τ$ is algebraic. Then, we can choose a two rational generators $ω_1$ and $ω_2$ of the rational cohomology group $H^1(X,ℚ)$. Let $p_i(z, γ)$ be the integral of $ω_i$ on a continuously varying cycle $γ$.  In \cite[Theorem 1.1]{sh1} it is shown that at CM points with discriminant $Δ$ there is a real number $Ω_Δ$ such that $p_i(γ,z)\sim{} \sqrt{π}Ω_Δ$. Here we use $a\sim{} b$ to mean that they are algebraically related, $a=kb$ with $k∊\bar{ℚ}$. Also, by \cite{sh}, the values of the derivatives of the Hauptmodul at elliptic points are algebraically related to the powers of $Ω_Δ$. More precisely, $D^eτ(z)\sim{} Ω^{2e}_Δ$.

\medskip These two results of Shimura imply, as in Section 5 of \cite{bg} that we have, $h(\tau(z)) \sim \Omega_{\Delta}/\Omega_{b}$ where $b$ is the discriminant of $z_b$ the choosen CM basepoint. $b=8$ for discriminant $14$ and $b=12$ for discriminant $15$. The exceptional set of a function $f$ is the set of algebraic numbers where $f$ is also algebraic. This in turn implies that  that for the Heun function $h(τ)$ on $X*(N)$ we have the exceptional set $E(h(τ))=\{τ(λ(z_b)|λ∊Λ^+)\}$. So, the exceptional set can be found by looking at points with the same fundamental discriminant as the basepoint. These values are listed in \cite{ne}. The main goal of the rest of the paper is to actually compute the values of the Heun functions for points in the exceptional set.

\begin{theorem} The rational exceptional set $E \cap \mathbb{Q}$ for the Shimura curve $X_{14}^{*}(1)$ is $\{0, \frac{75}{16}\}$.

\end{theorem}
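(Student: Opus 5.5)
The plan is to apply the characterization from the previous subsection. By Shimura's period relations, $F(\tau(z)) \sim \Omega_{\Delta}/\Omega_{8}$ whenever $z$ is a CM point of fundamental discriminant $\Delta$, and the base point is $z_8$, which has CM by $\Q(\sqrt{-2})$ and satisfies $\tau(z_8)=0$. So $F(\tau(z))$ is algebraic, at an algebraic value $\tau(z)$, exactly when $z$ is a CM point whose CM field is $\Q(\sqrt{-2})$. Equivalently, its discriminant has the form $-8f^2$ for some conductor $f$.

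One caveat needs attention here. $F$ is defined as a Heun function of $\tau/r_1$, while the base point of Proposition 3.6 sits at $\tau=0$. I would first check that the local solution $g_4^{1/4}$ of Lemma 3.4 has a nonzero value at $z_8$, which is the remark after Theorem 3.2. It then agrees with $F$ up to the algebraic factor $g_4(z_8)^{1/4}(z_8-\bar z_8)/(z-\bar z_8)$ on $M(z_8)$. After analytic continuation along paths in the tessellation, the period argument of Section 5 of \cite{bg} gives the equivalence above.

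With that in place, the rational exceptional set is the set of rational CM points on $X_{14}^*(1)$ whose discriminant $d$ lies in $\{-8f^2\}$. Rational CM points on $X^*_D(1)$ occur only for the finitely many discriminants whose ring class field, modulo the Atkin--Lehner action, gives a rational point. These are tabulated by Elkies in \cite{ne} for $D=14$, together with their $\tau$-values in the present normalization of the Hauptmodul (with $\tau(z_4)=\infty$ and $\tau(z_8)=0$). I would read off that table and keep only the entries with $d=-8f^2$. I expect these to be $d=-8$, giving $\tau=0$, and $d=-32$ (conductor $2$), giving $\tau=75/16$. I would also confirm that every other rational entry (e.g. $d=-4,-11,-35,-51,-84,-91,\dots$, whatever appears) has a fundamental discriminant different from $-8$, so that its value is transcendental. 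Note that $\infty=\tau(z_4)$ has discriminant $-4$, so it is excluded.

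The main obstacle is completeness: the list must contain \emph{all} rational CM points, not just the ones Elkies happened to compute. I would argue this by class number. A CM point of discriminant $d$ on $X^*_{14}(1)$ is rational only if the class group of $\OO_d$, modulo the subgroup generated by the classes of primes above $2$ and $7$ (the Atkin--Lehner quotient), is trivial. Since $(\Z/...)$-genus theory forces $h(d)$ to be at most $4$ here, Watkins' list of class numbers up to $4$ gives a finite, checkable set. I would intersect that set with $\{-8f^2\}$, and require that $2$ and $7$ do not split in $\Q(\sqrt d)$, so that $d$ embeds optimally in $\OO_{14}$. That leaves $-8$ and $-32$; for $-72$ one must check that it fails the embedding or rationality criterion. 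Finally, I would verify $\tau=75/16$ for $d=-32$ by matching with Elkies' table, for instance via the vanishing pattern of the Yang-basis forms or a Hecke correspondence from $z_8$.
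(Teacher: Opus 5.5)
Your overall route is the same as the paper's: Shimura's period relations reduce the problem to CM points of fundamental discriminant $-8$, and then you read off Elkies' Table 5. (The paper's written argument is only this lookup, and its narrative never says which discriminant sits at $75/16$.) The gap is in your concrete identification, which has the two discriminants backwards.

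\textbf{Why $-32$ is not a CM discriminant here.} The embedding criterion you state (``$2$ and $7$ do not split'') is the criterion for the maximal order only. For an order $R$ of conductor $f$, Eichler's count of optimal embeddings into $\OO_{14}$ is $h(R)\prod_{p\mid 14}\bigl(1-\{R/p\}\bigr)$, where $\{R/p\}=1$ whenever $p\mid f$. Locally, $\OO_{14}\otimes\Z_2$ is the maximal order of a division algebra, so it contains every integral element, and $\phi(K)\cap\OO_{14}$ always contains $\phi(\OO_K)$ at $2$. Hence $d=-32$ (conductor $2$, with $2\mid 14$) admits no optimal embedding at all. There is no CM point of discriminant $-32$ on $X_{14}^*(1)$, and your last step of matching $75/16$ with $d=-32$ cannot succeed.

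\textbf{Why $-72$ must be kept.} $d=-72$ has conductor $3$, prime to $14$, and it passes both of your tests. First, $h(-72)=2$ and the nontrivial class is $[\mathfrak p_2]$: the form $2x^2+9y^2$ represents $2$, while $x^2+18y^2$ does not. Second, $7$ is inert in $\Q(\sqrt{-2})$. So there are $2\cdot 1\cdot 2=4$ CM points of discriminant $-72$ on $X_{14}(1)$, and they form a single orbit under $w_2$ (acting by $[\mathfrak p_2]$) and $w_7$ (switching the orientation at the inert prime $7$). Their image on $X_{14}^*(1)$ is therefore one Galois-stable, hence rational, point of fundamental discriminant $-8$. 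This is the second exceptional point, so it must be $\tau=75/16$.

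\textbf{The repaired completeness argument.} Take the candidates $d=-8f^2$ with $\gcd(f,14)=1$. Since $7$ is inert in $\Q(\sqrt{-2})$, rationality forces $\mathrm{Pic}(R)$ to be generated by $[\mathfrak p_2]$ alone, so $h(-8f^2)\le 2$. The class number formula gives $h(-8f^2)=f\prod_{p\mid f}\bigl(1-(\tfrac{-8}{p})p^{-1}\bigr)\ge\varphi(f)$, which leaves only $f\in\{1,3\}$; you do not need Watkins' tables. The rational exceptional set is therefore $\{\tau(z_8),\tau(z_{72})\}=\{0,75/16\}$.

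Your illustrative list of other rational discriminants needs the same care. For example, $-51$ has $h=2$ with both $2$ and $7$ inert, so its CM points do not collapse to a rational point on $X_{14}^*(1)$.
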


\begin{proof} By Shimura's theorem, the exceptional set of the Heun function corresponds exactly
to the Hauptmodul values $\tau(z)$ where the underlying abelian surface $A_z$
admits Complex Multiplication (CM) by a quadratic imaginary field $\mathbb{Q}(\sqrt{-\Delta})$.
\end{proof}

\medskip Based on Table 5 in \cite{ne}, the only rational CM point on $X_{14}^{*}(1)$
corresponding to a fundamental discriminant  is located exactly at the elliptic basepoint $\tau = 0$.
This point corresponds to $z_4$, which has a stabilizer of order 4 and CM by $\mathbb{Q}(\sqrt{-7})$. Since there are no other rational CM points for this specific Shimura curve, the rational exceptional set is $\{0\}$

\medskip Recall from Section 3.1 that the Heun function $He(a,q;\alpha,\beta,\gamma,\delta;\tau)$ is defined
uniquely as the local power series solution near the regular singularity $\tau=0$ such that its constant term is 1.
Hence, $H_{14}(0)=1$. $\square$.

\begin{theorem} The  exceptional set for the Shimura curve $X^*_{15}(1)$ has exactly 4 rational values $\{0, \infty, 243, \frac{-729}{112} \}$. 
\end{theorem}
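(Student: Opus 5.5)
The plan is to combine the Shimura-period characterization of the exceptional set from the previous subsection with Elkies' table of rational CM points on $X^*_{15}(1)$. The argument reduces to a finite check on that list.

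\textbf{Step 1: characterize the exceptional set.} The basepoint is $z_{12}$, with $\tau(z_{12})=0$ and CM by $\Q(\sqrt{-12})$, so its fundamental discriminant is $-3$. By the Proposition expressing $h_4$ in $M(z_{12})$ through the Heun function, and by Shimura's results, at a CM point $z$ of fundamental discriminant $\Delta$ we have
$${\rm He}(81,\tfrac12;\tfrac13,\tfrac16,\tfrac12,\tfrac12;\tau(z))\sim \Omega_{\Delta}/\Omega_{3}.$$
This ratio is algebraic exactly when $\Q(\sqrt{-\Delta})=\Q(\sqrt{-3})$, i.e.\ $\Delta=-3$. This uses the standard fact, from the Chudnovsky/W\"ustholz input already used in \cite{bg}, that $\Omega_\Delta/\Omega_3$ is transcendental for distinct CM fields. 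Conversely, if $\tau(z)$ is algebraic but $z$ is not a CM point, the Wolfart--W\"ustholz argument shows the value is transcendental. Hence the exceptional set is $\{\tau(z) : z \text{ CM with } \Q(\sqrt{-\Delta})=\Q(\sqrt{-3})\}$, that is, the CM points of discriminant $-3f^2$.

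\textbf{Step 2: read off the rational points.} Elkies' list of rational CM points on $X^*_{15}(1)$ \cite{ne} is finite: a CM point is rational exactly when its discriminant has class number compatible with Atkin--Lehner descent. From that list I would extract the entries with discriminant of the form $-3f^2$. These should be $D=-12$ at $\tau=0$, $D=-3$ at $\tau=\infty$ (the point $z_3$), $D=-27$ at $\tau=243$, and $D=-147$ at $\tau=-729/112$. All other rational CM points, for example $z_{15}$ at $81$ and $z_{60}$ at $1$ with field $\Q(\sqrt{-15})$, have a different CM field and are excluded by Step 1. The point $\infty$ is listed with the caveat that it is a pole or singular point of the equation rather than a finite evaluation point.

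\textbf{Main obstacle.} The delicate part is justifying that Elkies' table is complete for discriminants $-3f^2$. This requires a class-number bound: for the CM point to be rational on $X^*_{15}(1)$, the relevant class group modulo the image of $W_{15}$ must be trivial. That restricts $f$ to finitely many values, which can be enumerated. For each surviving $f$, one must confirm the existence of an optimal embedding into $\OO_{15}$: the primes $3$ and $5$ must not split in $\Q(\sqrt{-3f^2})$. This is a short computation with Eichler's formula. Finally, one checks that the Hauptmodul values are the ones in \cite{ne}, which can be verified numerically by evaluating $\tau$ at the explicit fixed points.
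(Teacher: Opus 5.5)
Your strategy is the paper's: Shimura's period relations, as in Baba--Granath, reduce the exceptional set to CM points with field $\Q(\sqrt{-3})$, and Elkies' Table 6 supplies the rational ones. The gap is in Step 2 and in your completeness check.

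The point $\tau=243$ has discriminant $-48$ (conductor $f=4$), not $-27$. In fact $X^*_{15}(1)$ has no CM point of discriminant $-27$ at all, so your proposed numerical check at that point has nothing to evaluate. The reason is a condition missing from your embedding criterion. At a prime $p$ ramified in $B_{15}$, the maximal order of the division algebra $B_{15}\otimes\Q_p$ meets any embedded quadratic field in its full ring of integers. Hence an order of conductor $f$ embeds optimally into $\OO_{15}$ only if $p\nmid f$ for $p=3,5$, in addition to $p$ not splitting.

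This changes the count. Rationality on $X^*_{15}(1)$ requires $\mathrm{Pic}(R)$ to be generated by the class of the prime over $3$, which is the only prime dividing $15$ that ramifies in $\Q(\sqrt{-3})$. So one needs $h(-3f^2)\le 2$. Using $h(-3f^2)=\frac{f}{3}\prod_{p\mid f}\bigl(1-\bigl(\frac{-3}{p}\bigr)\frac{1}{p}\bigr)$ for $f>1$, this holds exactly for $f\in\{1,2,3,4,5,7\}$; for instance $h(-27)=1$ and $h(-75)=2$. Your criterion lets $f=3$ and $f=5$ survive, which gives six rational points and contradicts ``exactly 4''.

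Imposing $\gcd(f,15)=1$ leaves $f\in\{1,2,4,7\}$, i.e.\ $D\in\{-3,-12,-48,-147\}$, at $\tau=\infty,0,243,-729/112$. For $D=-48$ and $D=-147$ the class number is $2$, and the prime over $3$ is non-principal, because $3$ is represented neither by $x^2+12y^2$ nor by $x^2+xy+37y^2$. With this correction your enumeration becomes a self-contained completeness argument. That is more than the paper gives, since it simply cites Elkies' table.
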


\begin{proof}

    By \cite{ne} (Table 6), the above 4 points are exactly the CM rational points with fundamental discriminant 3 which is the fundamental discriminant of the basepoint $τ=0$. Further, $τ$ takes the values $243, -\frac{729}{112}$ at  CM points of discriminant $48, 147$ respectively.  Shimura's theorem shows that the exceptional set consists of exactly these points, just as in \cite{bg}.
\end{proof}

\section{Algebraic values of the Heun function} We adopt two different approaches to calculate the algebraic values at the exceptional points 243 and $\frac{-729}{112}$ respectively. In the first approach following \cite{bg}, we factorize the map from $ℍ→X^*(1)$ through $X^*(2)$. This factorization helps to cancel certain transcendental factors involving derivatives of $τ$. We use this to calculate $H_{15}(243)$. This approach requires a preimage of the target point to be in the Atkin-Lehner orbit of preimages of the basepoint. This fails for $H_{15}(\frac{-729}{112})$ so we use Yang polynomials which allow us to calculate the modular form ratio $h_4(z_{147})/h_4({z_8})$, which is the key ingredient in the expression for the Heun function. The second approach is more general and also works for $τ=243$, but we still use the factorization approach as the calculation follows from geometric reasoning involving Atkin-Lehner maps on $X^*(2)$, which might be of independent interest. 

\begin{theoremb}
\hspace{0.9em}   (i)$$    {\rm He}_{15}(243) = \frac{\sqrt{3}}{2} + \frac{i}{4}$$

(ii)$$ {\rm F}_{14}(\frac{75}{16}) = \frac{\sqrt{7}}{3}$$

\end{theoremb}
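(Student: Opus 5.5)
The plan is to evaluate the modular expression for the Heun function from the Proposition of Section 3 at a second preimage and then remove the transcendental factors. For (i), write $z_{48}$ for a point of $\HH$ with $\tau(z_{48})=243$, chosen so that $z_{48}$ lies in the same Atkin--Lehner/Hecke orbit as the basepoint $z_{12}$. Following Proposition 5 of \cite{bg}, I invert the formula of that Proposition:
\[
{\rm He}_{15}(\tau(z))^4=\frac{h_4(z)}{h_4(z_{12})}\cdot\frac{(z-\bar z_{12})^4}{(z_{12}-\bar z_{12})^4},
\]
valid in $M(z_{12})$, and then continue analytically along a path from $z_{12}$ to $z_{48}$ inside $M(z_{12})$. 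If $z_{48}$ is not inside that neighbourhood, I will instead track the monodromy of the Heun equation. The second factor is visibly algebraic. Everything therefore reduces to the ratio $h_4(z_{48})/h_4(z_{12})$, which Shimura's period relations (Section 4) predict to be algebraic because both points have fundamental discriminant $-3$.

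To compute this ratio I will factor $\tau$ through the level-$2$ cover, $\HH\to X^*_{15}(2)\xrightarrow{t}X^*_{15}(1)$. The first step is to write down the Hauptmodul $s$ of $X^*_{15}(2)$ together with the degree-$3$ rational map $\tau=t(s)$, using Elkies' tables \cite{ne}. The second step is to identify the Atkin--Lehner involution $w_2$ on $X^*_{15}(2)$ as an explicit Möbius map in $s$. The key geometric input is that some preimage of $z_{12}$ on $X^*_{15}(2)$ is sent by $w_2$ to a preimage of $z_{48}$. This holds because $\Q(\sqrt{-3})$-orders of conductors $2$ and $4$ are related by a $2$-isogeny.

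With this in hand, $h_4$ pulls back to a weight-$4$ form on the level-$2$ group. Using Yang's description \cite{yy}, that form can be written as $(Ds)^2$ times a rational function $R(s)$, and the transformation law under $w_2$ gives
\[
\frac{h_4(w_2z)}{h_4(z)}=\frac{R(w_2(s))}{R(s)}\left(\frac{dw_2(s)}{ds}\right)^{2}\cdot(\text{automorphy factor})^{-4}.
\]
The automorphy factor is algebraic, and $Ds$ cancels in the ratio. At the preimage of $z_{12}$ the Hauptmodul has a double zero ($\tau$ vanishes to order $2$ at $z_{12}$), so the expression is a $0/0$ indeterminate. I will resolve it by taking the limit $s\to s_0$ via l'Hôpital, equivalently by expanding $t(s)$ to second order at $s_0$.

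The final step is to substitute back, take the fourth root, and fix the branch numerically by comparing with the power series $\sum c_j\tau^j$ summed up to $\tau=243$ along the chosen path. This should yield $\frac{\sqrt3}{2}+\frac i4$. Part (ii) is handled in the same way. I take $F=He(\cdots;\tau/r_1)$, start from the basepoint $z_8$ at $\tau=0$, and use the level-$2$ (or level-$7$) cover of $X^*_{14}(1)$ to relate $g_4$ at the CM point above $\tau=75/16$ to $g_4(z_8)$; the branch is again fixed numerically, giving $\sqrt7/3$. I expect the main obstacle to be pinning down the explicit cover and Atkin--Lehner map: matching the normalisations of $s$ so that $w_2$ really sends the $z_{12}$-fibre to the $z_{48}$-fibre, and handling the ramification limit. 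If that fails, I will fall back on Yang's $\Psi_2$ polynomial, which relates $h_4(\gamma z)/h_4(z)$ directly to $\tau$.
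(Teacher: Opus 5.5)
Your plan for (i) follows the paper's proof. The paper also factors $\tau$ through the degree-$3$ cover $\eta(t)=\frac{27(1-t)(1-3t)^2}{2(1+t)^3}$ of $X^*_{15}(1)$ by $X^*_{15}(2)$ (taken from Tu rather than Elkies) and uses $w_2(t)=-t$. It works at the ramified preimage $t=\frac{1}{3}$ of $z_{12}$, resolves the $0/0$ as $h_4(z_{12})=\frac{2}{81}D^2\tau(z_{12})=\frac{27}{16}(Dt)^2$, and picks the fourth root numerically. Your ``some preimage'' must be this one: $\eta(-\frac{1}{3})=243$, whereas the unramified preimage $t=1$ is sent to $\eta(-1)=\infty$, the discriminant $-3$ point. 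Your caution about analytic continuation is justified and goes beyond the paper. Since $\tau(M(z_{12}))=\mathbb{C}\setminus[1,\infty)$, the value $\tau=243$ is not attained in $M(z_{12})$, only on its boundary, so the value depends on the side of approach.

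Part (ii), however, fails as you set it up. You propose a level-$2$ or level-$7$ cover of $X^*_{14}(1)$, but $2$ and $7$ divide the discriminant $14$. There is no Eichler order of level $2$ or $7$ in $B_{14}$. Moreover, every element of $\mathcal{O}_{14}$ of reduced norm $2$ or $7$ normalizes $\mathcal{O}_{14}$ and hence lies in $\Gamma^*$. These are the Atkin--Lehner involutions already divided out, so $\tau(\gamma z_8)=\tau(z_8)=0$ for all of them, and they can never reach $\tau=\frac{75}{16}$.

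The missing step is to identify the discriminant of the CM point over $\frac{75}{16}$. It is $-72=3^2\cdot(-8)$, so you need the correspondence at the prime $3\nmid 14$. This is the degree-$4$ cover $X^*_{14}(3)\to X^*_{14}(1)$, $\eta(t)=\frac{4(1+2t^2)(1-5t)^2}{9(1+t)^4}$ with $w_3(t)=-t$, which is what the paper uses. Over $\tau=0$ the fibre consists of $t=\frac{1}{5}$ (ramified) and $t=\pm i/\sqrt{2}$. The latter two are unramified and swapped by $w_3$; they correspond to the split primes above $3$ in $\Q(\sqrt{-2})$ and return to discriminant $-8$. Meanwhile $\eta(-\frac{1}{5})=\frac{75}{16}$. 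With this substitution the rest of your argument (the limit at the double zero, the automorphy factor, the numerical choice of root) goes through as in (i).
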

\begin{proof} (i)
\medskip Let $z_{48}=γz_{12}$ where $γ=-2-c+(ce-e)/2$.  $z_{48}$ indeed has discriminant 48 as can be checked by computed the discriminant of the the generator of $ℚ(γe_{12}γ^{-1})⋂\OO$ where $e_{12}=4c-3e$ is an element in $B_{15}$ which fixes $z_{12}$. To compute the value of the Heun function at $243$, we take the level 2 covering  $X^*_{15}(2)$ of given by the map, see \cite[Lemma 13.2]{tu}

$$η(t)=\frac{27(1-t)(1-3t)^2}{(2(1+t)^3)}$$

    \medskip Further, the Atkin-Lehner involution is given by $ω_2(t)=-t$. Both maps are as computed in \cite{tu} Lemma 13.2. Note that $t$ has multiplicity $1$ at $z_{12}$ and $η$ has multiplicity $2$ at $t(z_{12})=η^{-1}(0)=\frac{1}{3}$ 

    $$\begin{aligned} {\rm He}_{15}^{4}(243) &= {\rm He}_{15}^{4}(\tau(\gamma z_{12})) \\ &= \frac{(\gamma z_{12} - \overline{z}_{12})^{4}}{(z_{12} - \overline{z}_{12})^{4}} \frac{h_{4}(\gamma z_{12})}{h_{4}(z_{12})}. \end{aligned}$$

Because $\tau$ has an order of 2 at $z_{12}$, we compute $h_4(z_{12})$ by taking the limit:

$$\begin{aligned} h_{4}(z_{12}) &= \lim_{z \to z_{12}} \frac{D\tau^{2}}{81\tau} \\ &= \frac{2}{81} D^{2}(\tau)_{z_{12}} \\ &= \frac{2}{81} (D^{2}\eta)_{\frac{1}{3}} (Dt_{z_{12}})^{2} \\ &= \frac{27}{16} (Dt_{z_{12}})^{2}. \end{aligned}$$

Next, we calculate the translated form $h_4(\gamma z_{12})$ using the Atkin-Lehner involution properties, where $D\omega_2(1/3) = -1$ and $D\eta(\omega_2(1/3)) = -\frac{8019}{4}$:

$$\begin{aligned} h_{4}(\gamma z_{12}) &= D\tau_{\gamma z_{12}}^{2} \frac{1}{9526572} \\ &= \frac{D(\eta t\gamma)_{z_{12}}^{2}}{D\gamma_{z_{12}}^{2} \cdot 9526572} \\ &= \frac{D(\eta w_{2}t)_{z_{12}}^{2}}{D\gamma_{z_{12}}^{2} \cdot 9526572} \\ &= \frac{27}{64} \frac{Dt_{z_{12}}^{2}}{D\gamma_{z_{12}}^{2}}. \end{aligned}$$

   Dividing these two results and substituting them back into our expression for the Heun function yields:

$$\begin{aligned} {\rm He}_{15}^{4}(243) &= \frac{(\gamma z_{12} - \overline{z}_{12})^{4}}{(z_{12} - \overline{z}_{12})^{4}} \frac{1}{4D\gamma_{z_{12}}^{2}} \\ &= \left(\frac{\sqrt{3}}{2} + \frac{i}{4}\right)^{4}. \end{aligned}$$

   \medskip We can use a numerical calculation of the first few digits of ${\rm He}_{15}(243)$ to rule out $3$ of the 4-th roots to get ${\rm He}_{15}(243)=\frac{\sqrt{3}}{2} + \frac{i}{4}$.

(ii) The proof is same as part (i) with the map $η:X_{14}^*(3)→X_{14}^*(3)$ given in \cite{tu} Lemma 12.2 as $$η(t)=\frac{4(1+2t^2)(1-5t)^2}{9(1+t)^4}, ω_3(t)=-t$$
\end{proof}

\subsection{Modular Polynomial $Φ_2$ for $X^*_{15}(1)$}

  \begin{lemma} The modular polynomial $Φ_2$ for $X^*_{15}(1)$ is %$$Φ(x,τ)=4x^3τ^3 - 59049x^2τ^2 + 511758x^2τ + 511758xτ^2 - 531441x^2 + 131797368xτ - 531441τ^2 + 258280326x + 258280326τ - 31381059609$$

  \begin{align*}
\Phi_2(x, \tau) = & \, 4x^3\tau^3 - 59049x^2\tau^2 + 511758x^2\tau + 511758x\tau^2 - 531441x^2 \\
                  & + 131797368x\tau - 531441\tau^2 + 258280326x + 258280326\tau - 31381059609.
\end{align*}
  \end{lemma}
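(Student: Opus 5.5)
The plan is to realise $\Phi_2$ as the defining equation of the image of the level-$2$ curve $X^*_{15}(2)$ in $X^*_{15}(1)\times X^*_{15}(1)$, using the explicit cover from \cite[Lemma 13.2]{tu} recalled in the proof of Theorem 5.1(i).

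The Hecke correspondence of level $2$ on $X^*_{15}(1)$ is the image of
\[ X^*_{15}(2)\longrightarrow X^*_{15}(1)\times X^*_{15}(1),\qquad t\longmapsto (\eta(t),\,\eta(\omega_2(t))), \]
with
\[ \eta(t)=\frac{27(1-t)(1-3t)^2}{2(1+t)^3},\qquad \omega_2(t)=-t . \]
Since $\omega_2$ is the Atkin--Lehner involution at $2$, the second coordinate corresponds to $\tau(\gamma z)$ for an element $\gamma$ of norm $2$ normalising $\mathcal O_{15}$ modulo the level structure. So I would define $\Phi_2(x,\tau)$ as the irreducible polynomial vanishing on the curve $\{(\eta(t),\eta(-t))\}$. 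Its symmetry in $x$ and $\tau$ follows from $\omega_2$ being an involution.

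Concretely, I would compute the resultant
\[ \operatorname{Res}_t\Bigl(2(1+t)^3x-27(1-t)(1-3t)^2,\;\, 2(1-t)^3\tau-27(1+t)(1+3t)^2\Bigr). \]
Each factor has degree $3$ in $t$, so the resultant has bidegree at most $(3,3)$ in $(x,\tau)$. This matches the degree of the cover $\eta$, which is $3=[\Gamma^*:\Gamma^*\cap\gamma^{-1}\Gamma^*\gamma]$, the number of Hecke neighbours at $p=2$ for the quotient by the full Atkin--Lehner group. I would then normalise by removing the content of the integer coefficients, and check the result against the stated polynomial, including the leading coefficient $4x^3\tau^3$ and the constant $-31381059609=-3^{22}\cdot\ldots$.

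Two sanity checks on the output:
\begin{itemize}
\item The point $(0,243)$ should lie on the curve, since Theorem 5.1(i) uses $\eta(1/3)=0$ and $\eta(-1/3)=243$. Indeed, $\eta(-1/3)=27\cdot\frac{4}{3}\cdot 4/(2\cdot 8/27)=243$.
\item The fibre over $x=0$ should consist of $\tau=243$ with multiplicity $2$ (from the double root of $\eta$ at $1/3$), together with $\tau=\eta(-1)=\infty$ (the point $t=1$ maps to $\eta(-1)$, which has a pole). This matches the $x$-free part $-531441\tau^2+258280326\tau-31381059609=-3^{12}(\tau-243)^2$.
\end{itemize}

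The main obstacle is justifying that the resultant is irreducible and not a power of a smaller polynomial, i.e.\ that $t\mapsto(\eta(t),\eta(-t))$ is birational onto its image. I would argue this by exhibiting a point with a unique preimage: a generic $x$ has three preimages $t$, and their images $\eta(-t)$ are distinct, which can be checked numerically at one rational $x$. Alternatively, factor the resultant over $\mathbb Q$ directly.
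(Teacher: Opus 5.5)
Your proposal is correct and is essentially the paper's proof: eliminate the level-$2$ Hauptmodul $t$ from $x=\eta(t)$ and $\tau=\eta(-t)$ by a resultant. Your second relation $2(1-t)^3\tau-27(1+t)(1+3t)^2$ is the correct one; the paper prints $(1-3y)^2$, a typo that would put $(0,0)$ on the curve although $\Phi_2(0,0)=-3^{22}\neq 0$. Your fibre computation over $x=0$, which gives the two distinct values $243$ and $\infty$, already settles the birationality point that the paper leaves implicit.
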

  \begin{proof} 
    We have the covering $$η:X^*(2)→X^*(1), \quad η(y)=\frac{27(1-y)(1-3y)^2}{2(1+y)^3}$$ and $w_2(y)=-y$. As $ω_2$ is represented by an element of norm $2$, we have $η(ω_2(y(w))=τ(γ_jw)$  for some $γ_j$ of norm $2$. We have a polynomial relation between $τ(w))$ and $y(w)$, $27(1-y)(1-3y)^2-2τ(1+y)^3=0$. There is another such relation between $τ(γ_jw)$ and $y(w)$,  $27(1+y)(1-3y)^2-2τ(1-y)^3=0$. Taking resultants to eliminate $y(w)$, we get a degree 3 polynomial relation between $τ(w), τ(γ_jw)$ which is exactly our modular polynomial.
\end{proof}

\subsection{Yang Polynomial $Ψ_p$} 
 Computing the Heun function on the exceptional set requires calculating  the ratio $h_4(γτ)/h_4(τ)$, of the modular form $h_4$, at two CM points related by an element $γ$ in the quaternion algebra of norm $p$. The Yang polynomial $Ψ_p$ is defined in \cite{yy1} along with the process to compute it.  The polynomial $Ψ_p$ relates the above modular form ratio (transformed by a known automorphy factor) to the  Hauptmodul $τ$. So the  ratio can be computed as an implict function in terms of $τ$.  The original motivation for defining $Ψ_p$ was to calculate the modular polynomial $Φ_p$. The computation of  $Ψ_p$ starts with a low degree modular polynomial $Φ_2$, and proceeds as  $$Φ_2 → T_2 → T_p → Ψ_p$$. Here $T_p$ is the Hecke operator of index p and we are interested in computing its matrix representation with respect to the Yang basis of modular forms.
 
 More explicitly, let $γ_j=\begin{bmatrix} a_j & b_j \\ c_j & d_j \end{bmatrix}$ run through representatives of the cosets used to define the Hecke operator $T_p$ action of a modular form $F$. Let  $F_j(w)=\frac{det(γ_j)^{k/2}}{(c_jw+d_j)^k}F(γ_jw)$ then, $$T_p(F)=j^{k/2-1}\sum\limits_{j=0}^{p}F_j(w)$$
 For us, $F=h_4$.
 
 %todo clarify representatives All the $γ_j$ have norm $p$ 

\begin{definition} For a genus zero Shimura curve $X$, we choose $z$ as complex variables on $X$ and $τ:ℍ→X$ be a Hauptmodul and let $w$ vary in $ℍ$. 

\medskip{} Given $F$, a modular form on $X$ of weight $k$, Yang defines the bivariate polynomial $$Ψ_p(z, τ(w))=\prod\limits_{j=0}^p{\left( z-\frac{F_j(w)}{F(w)}\right)}$$  
\end{definition}

  %and . and the Hecke operator $T_p$ can be defined as $$T_p(F)=p^{k/2-1}\sum\limits_{j=0}^{p}F_j(w)$$ which implies $$T_{p}(F^r)=p^{kr/2-1}\sum\limits_{j=0}^{p}F_j(w)^r$$ 

  \medskip 
\subsection{Computation of $\mathbf{Ψ_p}$}  
\begin{itemize}
\item $\mathbf{Φ_2→T_2}$ The modular polynomial $Φ_2$ is computed in Lemma 5.2. 

To calculate the matrix $T_2$ with respect to the basis, first note that each of the basis forms are expressed in terms of $τ(w)$ and $τ'(w)$. A typical basis form is of the form $$F=\frac{τ^j(τ')^{k/2}}{τ^a(τ-1)^b(τ-81)^c}$$  However, $T_2(F)$ has terms $F(γ_jw)$ which are expressed in terms of $τ(γ_jw), τ'(γ_j(w))$. If we can express $τ(γ_jw)$ as a function of $τ$, we can write $T_2(F)$ purely in terms of $τ$ and $τ'$, which can then be decomposed into basis forms.

 \medskip{} The modular polynomial $$Φ_2(x,τ(w))=\prod_{j=0}^{2}(x-τ(γ_jw))$$ is exactly what is needed to write $τ(γ_jw)$ in terms of $τ$. This is because $Φ_2(τ(γ_jw),τ(w))=0$, which allows $τ(γ_jw)$ to be computable as an implicit function series in  $τ$.
\medskip{}
\medskip{}
\item $\mathbf{T_2→T_p}$ The two Hecke operators $T_2, T_p$ commute with each other. Assuming $T_2, T_p$ don't have repeated eigenvalues, they have exactly the same set of eigenvectors, but with possibly different eigenvalues.  Given the matrix of $T_2$, calculating the matrix for $T_p$ reduces to calculating its eigenvalues. 

\medskip{} By Jacquet-Langlands, the eigenvalues of $T_p$ on the Shimura curve $X^0_{N}(1)$ are the same as the eigenvalues for newforms of the classical modular group $Γ_0(N)$. To get the eigenvalues for the Shimura curve $X^*_{N}(1)=X^0_N(1)/<ω_p,\space p|D>$, we need the forms fixed by all the Atkin-Lehner maps $ω_p, p|D$. The correspondence flips the action of $ω_p$, see   \cite[Proposition 4]{yy1}. So, the forms on $X^*_{N}(1)$ will correspond to simultaneous $-1$ eigenspace of the $ω'_p$ which are the Atkin-Lehner involutions on the newforms on $Γ_0(N)$ . These newform eigenvalues along with the Atkin-Lehner actions are available via PARI-GP and also, online databases like LMFDB.
\medskip{}
\medskip{}
\item $\mathbf{T_p→Ψ_p}$  The coefficients of $Ψ_p$, as a polynomial in $z$, are elementary symmetric polynomials of $F_j(w)/F(w)$. On the other hand, if we know the explicit matrices for $T_{p}$ for the weights $kr$ and the decomposition of $F$ in terms of basis forms,  we can compute the power sums $\sum\limits_{j=0}^{p} F_j(w)^{r}$ in terms of the basis forms. Dividing by $F^r$, we also get the weight 0 form, $$\sum\limits_{j=0}^{p} \left(\frac{F_j(w)}{F(w)}\right)^{r}$$. 

 \medskip We can use the Girard-Newton identities to convert the above power sums to elementary symmetric polynomials, and thus get the coefficients of $Ψ_p$. 
\end{itemize}

Using the above process, we get the following expressions for $Ψ_2, Ψ_7$
\begin{flalign*} 
  &Ψ_2(z,t)=z^3 - (1/2)z^2 - (1/27)zt - (1/2916)t^2 + (1/16)z, \\
  &Ψ_7(z,t)=z^8 + (24/7)*z^7 + (64/1323)z^6t - (512/1750329)z^5t^2 + (1620/343)z^6 + (8576/64827)z^5t  & \\
  &    + (512/1361367)z^4t^2 + (10305536/794280046581)z^3t^3 + (65536/3063651608241)z^2t^4 + (7864/2401)z^5  &\\
  &    + (19904/151263)z^4t- (14130176/29417779503)z^3t^2 + (94748672/5559960326067)z^2t^3 \\
  &     + (10059907072/360435548057945409)zt^4+ (138774/117649)z^4 + (86753024/1089547389)z^3t  \\
  &    - (48782336/68641485507)z^2t^2+ (176973824000/13349464742886867)zt^3+ (65536/3063651608241)t^4  \\
  &    + (224677312/7626831723)z^2t - (455722998272/494424620106921)zt^2 + (16384/29417779503)t^3 \\
  &    + (7769448/40353607)z^3 + (1127020/282475249)z^2 + (3126642816/678223072849)zt  \\
  &    + (1536/282475249)t^2 - (1024746552/678223072849)z + (46656/1977326743)t  \\
  &    + 531441/13841287201 \\
\end{flalign*}

\subsection{Evaluating the Heun function at $\left(-\frac{729}{112}\right)$}

 We evaluate the Heun function at the rational exceptional point $-\frac{729}{112}$ by calculating the ratio of the modular form at two different CM points. 

\begin{theorem} $${\rm He}_{15}\left(-\frac{729}{112}\right)=\left(\frac{2^2 \cdot 3^3 \cdot 5^3}{7^5}\right)^{\frac{1}{6}}  $$
\end{theorem}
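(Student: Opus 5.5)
We plan to follow the strategy announced at the start of this section. By the Proposition expressing $h_4$ in $M(z_{12})$ through the Heun function, for the CM point $z_{147}$ of discriminant $147$ with $\tau(z_{147})=-\frac{729}{112}$ we have
\begin{equation*}
{\rm He}_{15}^4\left(-\tfrac{729}{112}\right)=\frac{(z_{147}-\bar z_{12})^4}{(z_{12}-\bar z_{12})^4}\,\frac{h_4(z_{147})}{h_4(z_{12})}.
\end{equation*}
We first pick $z_{147}$ in $M(z_{12})$. Since $147=3\cdot 7^2$, we then look for an element $\gamma\in\OO_{15}$ with $z_{147}=\gamma z_{12}$. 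Norm $7$ is not enough, because then the conductor changes by only $7$ while the discriminant of $z_{12}$ is $12=3\cdot 2^2$. So we take $\gamma$ of norm $70$ and factor it as $\gamma=\gamma_7 w_5\gamma_2$, with $\gamma_p$ of prime norm $p$ and $w_5$ an Atkin--Lehner element. We verify the discriminant of $\gamma z_{12}$ as in part (i) of the previous theorem, by conjugating $e_{12}=4c-3e$ by $\gamma$ and computing the discriminant of the optimal embedding.

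Next we chain the modular form ratios. Write $w_0=z_{12}$, $w_1=\gamma_2 w_0$, $w_2=w_5 w_1$, $w_3=\gamma_7 w_2=z_{147}$. Because $h_4$ is a form on $\Gamma^*$, which contains $w_5$, the middle step only contributes the automorphy factor of $w_5$. Each remaining factor
\begin{equation*}
z_p=\frac{\det(\gamma_p)^{2}}{(c w+d)^4}\frac{h_4(\gamma_p w)}{h_4(w)}
\end{equation*}
is a root of $\Psi_p(z,\tau(w))$, with $\tau(w)$ known at each stage. We get $\tau(w_0)=0$. We compute $\tau(w_1)$ as a root of $\Phi_2(x,0)$. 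The value $\tau(w_2)=\tau(w_1)$ holds since $w_5$ is in $\Gamma^*$. Finally $\tau(w_3)=-\frac{729}{112}$, which should be consistent with a root of $\Phi_7$ or with the resultant obtained from $\Psi_7$.

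So $z_2$ is a root of $\Psi_2(z,0)=z^3-\frac12 z^2+\frac1{16}z$. Note that at $\tau=0$ the elliptic point makes $h_4(z_{12})$ a limit, as in part (i). The factor $z_7$ is a root of the degree-$8$ polynomial $\Psi_7(z,\tau(w_2))$. Multiplying the ratios, the automorphy factors $(cw+d)^{-4}$ combine with $(z_{147}-\bar z_{12})^4/(z_{12}-\bar z_{12})^4$. This is the same cancellation as the step $\frac{1}{4D\gamma_{z_{12}}^2}$ in part (i), and it leaves ${\rm He}^4$ equal to a product of algebraic roots times explicit constants.

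The main obstacle is branch selection. $\Psi_2$ and $\Psi_7$ each have several roots, and the resulting candidate product is only determined up to roots of unity. The target value has the form $(\,\cdot\,)^{1/6}$, so its fourth power should come out as $\left(\frac{2^2 3^3 5^3}{7^5}\right)^{2/3}$. To fix the branches we will evaluate $h_4$ numerically at $w_j$ and $\gamma_p w_j$, using the explicit matrices, the formula $h_4=(D\tau)^2/(\tau(\tau-1)(\tau-81))$, and a numerically computed $\tau$. We isolate the correct root of each $\Psi_p$ by root isolation, and we also evaluate the Heun series numerically at $-\frac{729}{112}$, which lies in the region of convergence after analytic continuation. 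Comparing these numbers picks out the unique correct fourth root and the real positive sixth root. Finally we identify the algebraic number exactly by factoring the minimal polynomial of the selected root and confirming that it is $x^6-\frac{2^2 3^3 5^3}{7^5}$.
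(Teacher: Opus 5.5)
Your strategy is the paper's: factor a norm-$70$ element as $\gamma_7w_5\gamma_2$, chain the $\Psi_2$- and $\Psi_7$-ratios, let $w_5\in\Gamma^*$ contribute only an automorphy factor, and fix branches numerically. The gap is in the chaining step, which fails as you describe it.

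\textbf{The intermediate point.} Your own conductor remark determines it. $\gamma_7$ has odd norm, so it does not change the $2$-part of the conductor, and $w_5$ preserves discriminants. From discriminant $-48$, a norm-$7$ step only reaches $-48$ or $-48\cdot 49$. So $\gamma_2$ must go \emph{up} to discriminant $-3$. For the paper's $\gamma_2=2-c+e$ one checks $\gamma_2e_{12}\gamma_2^{-1}=\mu:=9c-6e-2ce$ with $\frac{1+\mu}{2}\in\mathcal{O}_{15}$. That point is the order-$6$ elliptic point, where $\tau=\infty$. There $h_4=(D\tau)^2/(\tau(\tau-1)(\tau-81))$ has a zero of order $4$, because $\tau\sim u^{-6}$ in a local parameter $u$.

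\textbf{Why your recipe fails.} Concretely, $\Phi_2(x,0)=-3^{12}(x-243)^2$ and $\Psi_2(z,0)=z(z-\frac14)^2$. Your recipe ``$\tau(w_1)$ is a root of $\Phi_2(x,0)$, $z_2$ is a root of $\Psi_2(z,0)$'' only produces $243$ and $\frac14$. That is the discriminant $-48$ path, which never reaches $z_{147}$. On the correct path $z_2=0$, $\tau(w_2)=\infty$ and $z_7=\infty$, so ``multiplying the ratios'' gives $0\cdot\infty$. Your numerical check would likewise divide by $h_4(w_2)=0$. This is why the paper says the first and third factors are $0$ and $\infty$ and a limit is needed. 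The limit sits at the intermediate elliptic point, not at $h_4(z_{12})$, which is finite and nonzero.

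\textbf{The missing step.} Keep $w$ near $z_{12}$, set $t=\tau(w)\to 0$, and multiply Puiseux branches rather than values.
\begin{itemize}
\item The branch of $\Phi_2(x,t)=0$ heading to the elliptic point is $x\sim 3^{12}/(4t^3)$. The matching root of $\Psi_2(z,t)$ is $Z_2\sim 4t^2/3^6$.
\item For $\Psi_7(z,x)$ with $x\to\infty$, the Newton polygon keeps only $z^8-\frac{2^9}{3^67^4}z^5x^2+\frac{2^{16}}{3^{12}7^8}z^2x^4=z^2\bigl(z^3-\frac{2^8}{3^67^4}x^2\bigr)^2$.
\item Its six unbounded roots are the descending steps to discriminant $-147$. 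Hence $Z_7\sim\zeta\bigl(2^8x^2/(3^67^4)\bigr)^{1/3}$ with $\zeta^3=1$, and $Z_2Z_7\to\zeta\,2^{10/3}7^{-4/3}$, a finite nonzero limit.
\item Embed $\gamma=25c-19e\mapsto\left(\begin{smallmatrix}-19\sqrt5&25\\-75&19\sqrt5\end{smallmatrix}\right)$. Then $(\gamma z_{12}-\bar z_{12})(-75z_{12}+19\sqrt5)=\frac52$, so the prefactor is $\frac{(5/2)^4}{70^2(z_{12}-\bar z_{12})^4}=\frac{225}{196}$.
\end{itemize}
This gives ${\rm He}_{15}^4(-\frac{729}{112})=\zeta\,2^{4/3}3^25^27^{-10/3}=\zeta\bigl(\frac{2^23^35^3}{7^5}\bigr)^{2/3}$. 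Only at this point can the numerical comparison legitimately choose $\zeta$ and the fourth root.
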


% Ψ def, computing Ψ using Hecke operators, triple ratio
\begin{proof}

 Just as in the $243$ case, we have 
 
 \begin{align*} {\rm He}_{15}^4\left(-\frac{729}{112}\right) = {\rm He}_{15}^{4}(\tau(\gamma z_{12}))  &= \frac{(\gamma z_{12} - \overline{z}_{12})^{4}}{(z_{12} - \overline{z}_{12})^{4}} \frac{h_{4}(\gamma z_{12})}{h_{4}(z_{12})}  \end{align*} 
 
 with $γ=25c-19e$.  This is the element in the quaternion algebra which moves $z_{12}$ to $z_{147}$, a point in $M(z_{12})$ with discriminant $147$. So computing ${\rm He}_{15}$ at $-\frac{729}{112}$ reduces to computing $\frac{h_4(γz_{12})}{h_4(z_{12})}$. 

  \medskip  Since $Ψ_p$ is defined only for prime $p$, we factorize $γ=γ_7*w_5*γ_2$ where $γ_7=3c - 2e$ has norm 7, $w_5=5-e+f$ has norm 5 and $γ_2=2-c+e$ has norm $2$. 

$$ z_{12}\xrightarrow{γ_2}z_{48}\xrightarrow{w_5}z_{48}\xrightarrow{γ_7}z_{147}$$
 $$τ(z_{12})=0, τ(z_{48})=∞, τ(z_{147})=-729/112$$

\medskip Then, we can solve for $z$ in terms of $t$ to calculate the first and third terms in the below product. Here, there are multiple solutions as $Ψ_2$ has degree 3 and $Ψ_7$ has degree 8 in $t$.\\

$$\begin{aligned} \frac{h_{4}(\gamma z_{12})}{h_{4}(z_{12})} &= \frac{h_{4}(\gamma_{2}z_{12})}{h_{4}(z_{12})} \cdot \frac{h_{4}(w_{5}\gamma_{2}z_{12})}{h_{4}(\gamma_{2}z_{12})} \cdot \frac{h_{4}(\gamma_{7}w_{5}\gamma_{2}z_{12})}{h_{4}(w_{5}\gamma_{2}z_{12})}. \end{aligned}$$

\medskip Here, the first and third factors in the product are $0$ and $∞$. So, one needs to take a limit to obtain the result, which we can do as $h_4$ is continuous. The second factor can be calculated using the standard automorphy factor for modular forms applied to $γ_2z_{12}$  as $w_5$ is in the group $Γ^*$. The multiple solutions, we obtained previously, for each ratio give us different results for the product ratio. However, we can use numerical approximation to rule out all but the single correct answer to get $H_{15}(-\frac{729}{112})=(\frac{2^2.3^3.5^3}{7^5})^{\frac{1}{6}}  $.

\end{proof}

\end{document}